\long\def\@savemarbox#1#2{\global\setbox#1\vtop{\hsize\marginparwidth 
%%%%%  \@parboxrestore #2}}
  \@parboxrestore\tiny\raggedright #2}}
\newcommand\lref[1]{\ref{#1}%
\@ifundefined{r@DisplaY #1}{}{ (#1)}}% Prints label as well as
\newcommand\fakelabel[2]{\@bsphack\if@filesw {\let\thepage\relax
   \newcommand\protect{\noexpand\noexpand\noexpand}%
\xdef\@gtempa{\write\@auxout{\string
      \newlabel{#1}{{#2}{\thepage}}}}}\@gtempa
   \if@nobreak \ifvmode\nobreak\fi\fi\fi\@esphack}
\def\SL@margintext#1{{\showlabelsetlabel{\tiny\{\SL@prlabelname{#1}\}}}}
\def\Empty{}
\newcommand\oplabel[1]{
  \def\OpArg{#1} \ifx \OpArg\Empty {} \else
        \label{#1}
  \fi}
\newtheorem{theoremSt}{Theorem}[section]
\newtheorem{exampleSt}[theoremSt]{Example}
\newtheorem{exerciseSt}[theoremSt]{Exercise}
\newcommand\MakeStEnv[1]{
  \newenvironment{#1}[1]{%    environment without explicit label
  \begin{#1St} \oplabel{##1}%
  \global\def\CrntSt{\thetheoremSt}%
  %\def\Labl{##1}\ifx\Labl\Empty{} \else {\em (\Labl)\,}\fi%
  %%%{\em ##2}%
}{ 
  \end{#1St} }
  \newenvironment{#1+}[1]{%   environment with explicit label
  \begin{#1St} \label{##1}%
  \label{DisplaY ##1}%
  \global\def\CrntSt{\thetheoremSt}%
  \def\Labl{##1}\ifx\Labl\Empty{} \else {\em (\Labl)\,}\fi%
  %%%{\em ##2}%
}{ 
  \end{#1St} }
}
\newlength{\saveu}
\newenvironment{pf*}[1]{%
 \begin{proof}[#1]%
}{ 
 \end{proof}
}
\newcommand{\finishproof}[1]{ 
  \def\FPArg{#1}
  \ifx\FPArg\Empty
        \newcommand\FPArg{\CrntSt}  \fi
  \smallbreak\noindent\makebox[\textwidth]{\hfill\fbox{\FPArg}}
  \medbreak\noindent
}
\newcommand\FF{{\mathcal F}}
\newcommand\LL{{\mathcal L}}
\newcommand\MM{{\mathcal M}}
\newcommand\PP{{\mathcal P}}
\newcommand\PMF{{\PP\kern-2pt\MM\FF}}
\newcommand\PML{{\PP\kern-2pt\MM\LL}}
\newcommand\half{{\textstyle{\frac12}}}
\newcommand\Mod{\operatorname{Mod}}
\newcommand\bbR{{\mathord{\text{I\kern-2pt R}}}}        % Fake blackboard
\newcommand\bbH{{\mathord{\text{I\kern-2pt H}}}}        % Fake blackboard
\newcommand\PSL[1]{\text{PSL}_{#1}}
\newcommand\SL[1]{\text{SL}_{#1}}
\newcommand\bigrightarrow[1]{\hbox to #1{\rightarrowfill}}
\newcommand\bigleftarrow[1]{\hbox to #1{\leftarrowfill}}
\newcommand\semidir{\mathrel{\hbox{\vrule depth-.03ex height1.1ex\kern-0.15em$\times$}}}
\newcommand{\diam}{\operatorname{diam}}
\numberwithin{equation}{section}
\newcommand{\inj}{\operatorname{inj}}
\newcommand{\fsubd}{\mathrel{{\scriptstyle\searrow}\kern-1ex^d\kern0.5ex}}
\newcommand{\bsubd}{\mathrel{{\scriptstyle\swarrow}\kern-1.6ex^d\kern0.8ex}}
\newcommand{\fsubeq}{\mathrel{\raise-.7ex\hbox{$\overset{\searrow}{=}$}}}
\newcommand{\bsubeq}{\mathrel{\raise-.7ex\hbox{$\overset{\swarrow}{=}$}}}
\newcommand{\tsh}[1]{\left\{\kern-.9ex\left\{#1\right\}\kern-.9ex\right\}}
\newcommand\Teich{{\mathcal T}}
\newcommand\interior{{\rm int}}
\newcommand{\bb}{\mathbb}
\newcommand{\cx}{{\bb C}}
\renewcommand{\half}{{\bb H}}
\newcommand{\integers}{{\bb Z}}
\newcommand{\ratls}{{\bb Q}}
\newcommand{\reals}{{\bb R}}
\renewcommand{\bold}[1]{\medskip \noindent {\bf \boldmath #1
                        }\nopagebreak[4]}
\newcommand{\bdry}{\partial}
\newcommand{\compos}{\circ}
\newcommand{\st}{\; : \;}         %Such that
\newcommand{\zed}{\integers}
\newcommand{\AH}{\operatorname{AH}}
\newcommand{\area}{\operatorname{area}}
\newcommand{\core}{\operatorname{core}}
\renewcommand{\diam}{\operatorname{diam}}
\renewcommand{\inj}{\operatorname{inj}}
\renewcommand{\interior}{\operatorname{int}}
\renewcommand{\Mod}{\operatorname{Mod}}
\renewcommand{\PSL}{\operatorname{PSL}}
\renewcommand{\SL}{\operatorname{SL}}
\renewcommand{\Teich}{\operatorname{Teich}}
\newcommand{\vol}{\operatorname{vol}}
\newcommand{\cC}{{\calC}}
\newcommand{\cK}{{\calK}}
\newcommand{\calC}{{\mathcal C}}
\newcommand{\calK}{{\mathcal K}}
\newcommand{\calM}{{\mathcal M}}
\newcommand{\calV}{{\mathcal V}}
\newcommand{\calW}{{\mathcal W}}
\renewcommand{\hbar}{\bar{{\mathbb H}}^3}
\renewcommand{\core}{C}
\newcommand{\bilip}{\operatorname{bilip}}
\def\SL{\rm{SL}}
\def\core{\operatorname{core}}
\def\sys{\operatorname{sys}}
\def\depth{\operatorname{depth}}
\begin{document}
\title{Inflexibility, Weil-Petersson  distance,
  and volumes of fibered 3-manifolds}
\author[Brock and  Bromberg]{Jeffrey  Brock and Kenneth Bromberg}
\thanks{J. Brock was partially supported by NSF grant DMS-1207572. K. Bromberg was partially supported by NSF grant 
DMS-1207873}

\begin{abstract} A recent preprint of S. Kojima and G. McShane
  \cite{Kojima:Mcshane:volumes} observes a beautiful explicit
  connection between Teichm\"uller translation distance and hyperbolic
  volume. It relies on a key estimate which we supply here: using
  geometric inflexibility of hyperbolic 3-manifolds, we show that for
  $S$ a closed surface, and $\psi \in \Mod(S)$ pseudo-Anosov, the
  double iteration $Q(\psi^{-n}(X),\psi^n(X))$ has convex core volume
  differing from $2n \vol(M_\psi)$ by a uniform additive constant,
  where $M_\psi$ is the hyperbolic mapping torus for $\psi$.  We
  combine this estimate with work of Schlenker, and a branched covering
  argument to obtain an explicit lower bound on {\em Weil-Petersson
    translation distance} of a pseudo-Anosov $\psi \in \Mod(S)$ for
  general compact $S$ of genus $g$ with $n$ boundary components: we
  have
$$   \vol(M_\psi)
\le 3 \sqrt{\pi/2(2g - 2 +n)} \, \| \psi \|_{\rm WP}
.$$ This gives the first explicit estimates on the Weil-Petersson
systoles of moduli space, of
the minimal distance between nodal surfaces in the completion of
Teichm\"uller space, and explicit lower bounds to the Weil-Petersson
diameter of the moduli space via \cite{Cavendish:Parlier:diameter}.  In the process, we recover the estimates of
\cite{Kojima:Mcshane:volumes} on Teichm\"uller translation distance
via a Cauchy-Schwarz estimate (see \cite{Linch:wp}).
\end{abstract}
\date{\today}
\maketitle

\section{Introduction}
Let $S$ be a closed surface of genus $g >1$.
Let $\psi: S \to S$ be a pseudo-Anosov element of $\Mod(S)$, $Q_n =
Q(\psi^{-n}(X), \psi^n (X))$ quasi-Fuchsian simultaneous uniformizations, and $M_\psi$ the
hyperbolic mapping torus for $\psi$. Let $\core(Q_n)$ denote the
convex core of $Q_n$.  We will prove the following
theorem.
\begin{theorem}{main}
The quantity
$$|\vol(\core(Q_n)) - 2n\vol(M_\psi)|$$
is uniformly bounded.
\end{theorem}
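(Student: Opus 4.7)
The plan is to realize $\core(Q_n)$, up to a biLipschitz distortion that decays exponentially, as $2n$ consecutive fundamental domains for the deck group of the infinite cyclic cover $\widetilde M_\psi$ of the mapping torus $M_\psi$. Since that deck group is generated by an isometric lift $\tau$ of $\psi$ with quotient $M_\psi$, each fundamental domain has volume exactly $\vol(M_\psi)$, and the error in the volume contribution of a fundamental domain will decay exponentially with its distance from $\partial \core(Q_n)$. Summing a geometric series then gives the uniform bound.

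First I would fix the target geometry: because $\psi$ is pseudo-Anosov, $\widetilde M_\psi \cong S \times \R$ is a doubly degenerate hyperbolic 3-manifold whose ending laminations are the stable and unstable laminations of $\psi$, and $\langle \tau \rangle$ acts cocompactly with uniformly positive injectivity radius. Choosing basepoints $x_n \in \core(Q_n)$ roughly equidistant from both components of $\partial \core(Q_n)$, the pointed sequence $(Q_n, x_n)$ converges geometrically to $(\widetilde M_\psi, \tilde x)$. This is a consequence of Thurston's double limit theorem together with standard arguments on geometric limits of Bers slices, since the end invariants $\psi^{\pm n}(X)$ project, after normalizing, to the stable and unstable laminations of $\psi$.

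With this geometric limit in hand, the engine of the argument is inflexibility: by the work of McMullen and Brock-Bromberg, the approximate isometries $f_n : B_R(\tilde x) \to Q_n$ supplied by geometric convergence can be refined so that their pointwise biLipschitz distortion decays like $e^{-\alpha\, d(\cdot,\partial \core(Q_n))}$ on the thick part. Transporting the $\tau$-translates $D_k = \tau^k(D)$ of a fixed fundamental domain $D$ via $f_n$, for $-n \leq k \leq n-1$, produces subregions $D_k^{(n)} \subset \core(Q_n)$ with
\[
|\vol(D_k^{(n)}) - \vol(M_\psi)| \leq C\, e^{-\alpha\,\min(|k+n|,\,|n-1-k|)}.
\]
The residual set $\core(Q_n) \setminus \bigsqcup_k D_k^{(n)}$ is a uniformly bounded collar of $\partial \core(Q_n)$, whose volume is controlled by a constant depending only on $\vol(M_\psi)$ and the area of $\partial \core(Q_n)$, the latter being bounded universally by Gauss-Bonnet. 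Summing the exponentially decaying errors against this $O(1)$ end contribution yields $|\vol(\core(Q_n)) - 2n\,\vol(M_\psi)| \leq C'$.

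The hard part is making the inflexibility step do real work: rather than merely giving biLipschitz control on compact subsets of the geometric limit, it must produce a consistent tiling of $\core(Q_n)$ by $2n$ near-fundamental domains with summable volume error. In particular one must verify that the $\tau$-translates $D_k$ genuinely embed in $\core(Q_n)$ via $f_n$ in a way compatible with the $\Z$-action on $\widetilde M_\psi$, and that the transition region near $\partial \core(Q_n)$ has $O(1)$ volume independent of $n$, which itself demands quantitative control on how the doubly degenerate ends of $\widetilde M_\psi$ are approximated by the Bers-slice-type ends of $Q_n$.
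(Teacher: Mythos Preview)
Your overall strategy---compare $\core(Q_n)$ to a stack of $2n$ fundamental domains in the cyclic cover $Q_\infty=\widetilde M_\psi$---is the paper's as well, but the mechanism you propose differs in a way that leaves a real gap. You want to transport fundamental domains from $Q_\infty$ into $Q_n$ via maps $f_n$ whose distortion decays exponentially with depth, and then sum per-domain volume errors as a geometric series. The difficulty is that neither geometric convergence nor inflexibility directly supplies such an $f_n$ on a region whose diameter grows like $n$: geometric convergence gives near-isometries only on \emph{fixed} compacta (for $n$ large depending on the compactum), and the inflexibility theorem produces maps between \emph{quasiconformally related} manifolds, i.e.\ between $Q_m$ and $Q_{m+1}$, not between the degenerate limit $Q_\infty$ and $Q_n$. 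Promoting these ingredients to a global embedding $Q_\infty[-n,n]\hookrightarrow Q_n$ with the claimed distortion profile is essentially the content of the theorem; your last paragraph correctly flags this, but the proposed route does not resolve it.

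The paper sidesteps this by reversing direction and exploiting a feature you do not use: the Reimann maps $\phi_m\colon Q_m\to Q_{m+1}$ furnished by inflexibility are \emph{exactly volume preserving}. Fixing $k$, the paper locates two bounded product caps $\calK_k^\pm$ near the ends of $\core(Q_{N+k})$ (built via the singly degenerate intermediaries $Q_{\psi^{-N}(X),\psi^\infty}$ and $Q_{\psi^{-\infty},\psi^N(X)}$), lets $\calK_k$ be the region between them, and pushes $\calK_k$ forward by the compositions $\Phi_n=\phi_{N+k+n}\circ\cdots\circ\phi_{N+k}$. There is \emph{no} volume error to sum: $\vol(\Phi_n(\calK_k))=\vol(\calK_k)$ for every $n$. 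Inflexibility, in the form of Proposition~\ref{inflexible iteration}, is invoked only to keep the bi-Lipschitz constant uniformly bounded on the fixed-size caps $\calK_k^\pm$, so that a limit $\Phi_\infty(\calK_k)\subset Q_\infty$ exists and is trapped in $Q_\infty[-k-d,k+d]$ for a uniform $d$. Hence $\vol(\calK_k)$ agrees with $2k\,\vol(M_\psi)$ up to $2d\,\vol(M_\psi)$, while $\vol(\calK_k)$ agrees with $\vol(\core(Q_{N+k}))$ up to a fixed constant coming from the end collars. No geometric series appears, and the only places where bi-Lipschitz control is needed are the two caps of bounded diameter, not a region of size $n$.
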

The possibility of such a result was suggested in
\cite[\S1]{Brock:3ms1}. It gives an alternative, more direct proof of the main
result of that paper comparing hyperbolic volume of $M_\psi$ and {\em
  Weil-Petersson translation distance} of $\psi$ as a direct corollary
of a similar comparison in the quasi-Fuchsian case \cite{Brock:wp}.  A
recent preprint of S. Kojima and G. McShane shows how this suggestion
can be used to give sharper bounds between volume of $M_\psi$ and {\em
  normalized entropy}, or the translation distance in the
Teichm\"uller metric of $\psi$.

In addition to supplying a proof of Theorem~\ref{main}, we focus here on
volume implications for the Weil-Petersson metric on Teichm\"uller
space.  Indeed, by analyzing the {\em renormalized volume}, rather
than the convex core volume, Jean-Marc Schlenker improved the upper
bound in \cite{Brock:3ms1}, for closed $S$ with 
genus at least $2$.
\begin{theorem}{improved wp bound} {\rm (Schlenker)} Let $S$ be a
  closed surface of genus $g>1$ and let $X$, $Y$ lie in $\Teich(S)$.  There is a
  constant $K_S>0$ so that  
$$\vol(\core(Q(X,Y))) \le 3 \sqrt{ \pi(g-1)} \, d_{\rm
  WP}(X,Y) + K_S.$$ 
\end{theorem}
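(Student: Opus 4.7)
The plan is to pass from the convex core volume to the renormalized volume $V_R$ of Graham--Witten, a smooth function on quasi-Fuchsian space $QF(S)\cong\Teich(S)\times\Teich(\bar S)$ whose Weil--Petersson gradient can be computed in closed form and estimated by classical complex analytic means. The strategy has three stages: (i) compare $\vol(\core Q(X,Y))$ and $V_R(Q(X,Y))$ up to an additive constant depending only on the topology of $S$; (ii) bound the Weil--Petersson gradient of $V_R$ via the Schwarzian derivative of the projective boundary structure at infinity; and (iii) integrate along a Weil--Petersson path connecting $Q(X,Y)$ to the Fuchsian locus.

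For stage (i), the boundary of $\core Q(X,Y)$ is a pleated surface whose bending lamination has total mass bounded by $-2\pi\chi(S)$ via Gauss--Bonnet. Using the Epstein envelope to interpolate between $\partial\core Q$ and a horospherical neighborhood of infinity, one expresses the difference $V_R-\vol(\core)$ in terms of the induced metric and bending data on $\partial\core$, yielding a constant $K_S$ depending only on genus such that
\[
|\vol(\core Q(X,Y)) - V_R(Q(X,Y))| \le K_S.
\]

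Stage (ii) rests on the Krasnov--Schlenker variational formula: the first-order variation of $V_R$ in a Beltrami direction $\mu$ supported on one component of the conformal boundary is, up to a standard scalar factor, the Weil--Petersson pairing of $\mu$ with the Schwarzian derivative $\phi$ of the developing map from the disk to the corresponding component of the domain of discontinuity, viewed as a holomorphic quadratic differential on the boundary Riemann surface. The Kraus--Nehari inequality gives the pointwise hyperbolic bound $|\phi|_{\rm hyp}\le 3/2$, and integration against the hyperbolic area form, using $\mathrm{area}(X)=4\pi(g-1)$, produces
\[
\|\phi\|_{WP}^{2} \;\le\; \tfrac{9}{4}\cdot 4\pi(g-1) \;=\; 9\pi(g-1),
\]
so that $\|\phi\|_{WP}\le 3\sqrt{\pi(g-1)}$. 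Tracking the scalar factors in the variational formula produces the corresponding bound on the Weil--Petersson gradient of $V_R$ along a path lying in either $\Teich(S)$ factor.

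For stage (iii), integrate this gradient bound along a minimal Weil--Petersson path inside one factor from $Q(X,X)$, where $V_R$ and $\vol(\core)$ both vanish (the convex core of a Fuchsian manifold being totally geodesic), to $Q(X,Y)$; such a path has Weil--Petersson length $d_{WP}(X,Y)$. Combined with stage (i) this yields the stated inequality, with any remaining constant-order discrepancies absorbed into $K_S$. The principal obstacle is stage (ii): while the Kraus--Nehari bound is classical, isolating the Schwarzian derivative as the Weil--Petersson gradient of $V_R$ requires careful use of the Epstein envelope to cancel divergent boundary terms in the definition of $V_R$, and one must track the constants precisely so that the sharp universal multiplier $3\sqrt{\pi(g-1)}$ emerges rather than a suboptimal multiple.
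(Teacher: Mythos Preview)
The paper does not supply its own proof of this theorem; it is quoted as Schlenker's result with the parenthetical ``(See \cite{Schlenker:volume})'' and is used as a black box in the derivation of Theorem~\ref{translation bound}. Your outline is precisely Schlenker's argument in \cite{Schlenker:volume}: the additive comparison of $\vol(\core)$ with the renormalized volume $V_R$ via the Epstein envelope, the Krasnov--Schlenker variational identity identifying the Weil--Petersson gradient of $V_R$ with the boundary Schwarzian, the Nehari pointwise bound $|\phi|_{\rm hyp}\le 3/2$ integrated over the hyperbolic area $4\pi(g-1)$ to get $\|\phi\|_{\rm WP}\le 3\sqrt{\pi(g-1)}$, and integration along a Weil--Petersson path in one Bers factor from the Fuchsian locus.

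One small caution on stage (iii): the Weil--Petersson metric is incomplete, so a length-minimizing geodesic from $X$ to $Y$ need not exist in $\Teich(S)$; you should phrase the integration as being along paths of length $d_{\rm WP}(X,Y)+\epsilon$ and then let $\epsilon\to 0$. Otherwise the proposal is correct and coincides with the cited proof.
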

\noindent (See  \cite{Schlenker:volume}).

The  {\em Weil-Petersson translation length} of $\psi$ as an automorphism of
$\Teich(S)$ is defined by taking the infimum
$$\| \psi \|_{\rm WP} = \inf_{X \in \Teich(S)} d_{\rm WP}(X , \psi(X)).$$ 
Daskalopoulos and Wentworth \cite{Daskalopoulos:Wentworth:mcg} showed
this infimum is realized by $X$ in $\Teich(S)$ when $\psi$ is pseudo-Anosov.

Combining Theorem~\ref{main}, Theorem~\ref{improved wp bound}, and a
covering trick, we obtain the following Theorem.
\begin{theorem}{translation bound}
  Let $S$ be a compact surface with genus $g$ and $n$ boundary
  components $\chi(S)<0$ 
 and let $\psi \in
  \Mod(S)$ be pseudo-Anosov.  Then we have
$$\vol(M_\psi) \le  3 \sqrt{ \frac{\pi}{2} (2g-2+n)}  \,  \| \psi \|_{\rm WP}. $$
\end{theorem}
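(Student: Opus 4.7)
The plan is to settle the closed case ($n=0$) directly by combining Theorem~\ref{main} with Theorem~\ref{improved wp bound}, and then to reduce the general case to the closed case via a high-degree cyclic branched cover ramified over the boundary components.

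For the closed case, take $X\in\Teich(S)$ realizing the infimum $\|\psi\|_{\rm WP}$ by \cite{Daskalopoulos:Wentworth:mcg}, and set $Q_n=Q(\psi^{-n}X,\psi^n X)$. Applying Schlenker's bound,
$$\vol(\core(Q_n))\le 3\sqrt{\pi(g-1)}\,d_{\rm WP}(\psi^{-n}X,\psi^n X)+K_S\le 6n\sqrt{\pi(g-1)}\,\|\psi\|_{\rm WP}+K_S,$$
while Theorem~\ref{main} gives $\vol(\core(Q_n))\ge 2n\vol(M_\psi)-C$. Dividing by $2n$ and sending $n\to\infty$ yields $\vol(M_\psi)\le 3\sqrt{\pi(g-1)}\,\|\psi\|_{\rm WP}$, which agrees with the desired bound at $n=0$ since $\pi(g-1)=(\pi/2)(2g-2)$.

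For the general case, view $S$ as a closed surface $\hat S$ of genus $g$ with $n$ marked points, and after replacing $\psi$ by a suitable power, arrange that $\psi$ fixes every marked point. For each large integer $d$ (chosen so $n(d-1)$ is even and the lift exists), take the $d$-fold cyclic branched cover $\pi:\tilde{\hat S}\to\hat S$ totally ramified at the $n$ marked points; Riemann-Hurwitz gives $\tilde g-1 = d(g-1)+n(d-1)/2$, and after a further power of $\psi$ if necessary, $\psi$ lifts to a pseudo-Anosov $\tilde\psi$ on the closed surface $\tilde{\hat S}$ whose invariant foliations pick up higher-order prongs at the branch preimages. The closed case applied to $\tilde\psi$ gives
$$\vol(M_{\tilde\psi})\le 3\sqrt{\pi(\tilde g-1)}\,\|\tilde\psi\|_{\rm WP}.$$

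It remains to transfer this back through $\pi$ as $d\to\infty$. The closed hyperbolic mapping torus $M_{\tilde\psi}$ is obtained from the unbranched degree-$d$ cover of $M_\psi$ (of volume $d\vol(M_\psi)$) by Dehn filling the $n$ lifted cusps along the covers of the puncture slopes; since these slopes have geometric length of order $d$, Thurston's Dehn surgery volume theorem yields $\vol(M_{\tilde\psi}) = d\vol(M_\psi)(1-o_d(1))$. On the Teichm\"uller side, the pullback $\pi^{\ast}:\Teich(S)\to\Teich(\tilde{\hat S})$ is $\psi$-equivariant, and integrating a lifted Beltrami differential over the $d$ sheets (comparing the pulled-back orbifold hyperbolic metric with the smooth hyperbolic one on $\tilde{\hat S}$, which agree off shrinking neighborhoods of the branch points as $d\to\infty$) gives $\|\tilde\psi\|_{\rm WP}^2\le d\,\|\psi\|_{\rm WP}^2\,(1+o_d(1))$. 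Substituting into the closed-case inequality, dividing by $d$, and using $(\tilde g-1)/d\to g-1+n/2 = |\chi(S)|/2$, one obtains $\vol(M_\psi)\le 3\sqrt{(\pi/2)(2g-2+n)}\,\|\psi\|_{\rm WP}$ in the limit. The principal obstacle is controlling both asymptotic corrections simultaneously: the Dehn-surgery volume correction is controlled because the filling slopes lengthen linearly in $d$, and the Weil-Petersson correction is controlled because the discrepancy between the orbifold and smooth hyperbolic metrics contributes only $o_d(1)$ to the $L^2$ pairing of the relevant quadratic differentials (which vanish to high order at the branch points).
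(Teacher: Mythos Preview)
Your argument follows the same two-step strategy as the paper: the closed case via Theorem~\ref{main} and Schlenker's bound, then a branched-cover reduction for surfaces with boundary. The closed case is identical. For the bounded case the paper's execution is cleaner in one respect: rather than arguing directly that $\|\tilde\psi\|_{\rm WP}^2\le d\,\|\psi\|_{\rm WP}^2(1+o_d(1))$ by comparing the orbifold and smooth hyperbolic metrics on the closed cover, the paper factors the map $\Teich(S)\to\Teich(\tilde{\hat S})$ as ``lift to the (punctured) unbranched cover'' followed by ``fill in the punctures.'' The first step is an exact isometry of normalized Weil--Petersson metrics, and the second is a contraction of the ordinary Weil--Petersson metric by Ahlfors' Schwarz Lemma (the Poincar\'e metric on the filled surface is dominated by that on the punctured one); this yields $\|\hat\psi_d\|_{\rm WP}\le\sqrt{d}\,\|\psi\|_{\rm WP}$ with no asymptotic error term to control. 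Your Dehn-filling description of the volume side and the paper's orbifold-convergence description are equivalent viewpoints. One small point you should address: the forgetful/filling map only lands in a hyperbolic Teichm\"uller space when the closed cover has genus at least~$2$, and the cyclic totally-ramified cover need not exist for all~$d$; the paper handles this uniformly by first passing to a finite cover $S_{g',n'}$ with $g'>1$ and $n'$ even, which costs nothing since the normalized metric is cover-invariant.
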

The case when $S$ is closed readily follows from
  Theorem~\ref{main} and Theorem~\ref{improved wp bound}.
When $S$ has boundary, a branched covering argument allows us to
recover the estimates from the closed case; we defer the proof to
section~\ref{proof}.  

% A further consequence of the branched covering
% argument is the generalization of Schlenker's theorem to the setting
% of $S$ with boundary:
% \begin{theorem}{punctured wp bound} Let $S$ be a
%   compact surface of negative Euler-characteristic of genus $g$ with
%   $n$ boundary components, and let $X$, $Y$ lie in $\Teich(S)$.  There is a
%   constant $K_S>0$ so that  
% $$\vol(\core(Q(X,Y))) \le 3 \sqrt{ \frac{\pi}{2}(2g-2 +n)}\, d_{\rm
%   WP}(X,Y) + K_S.$$ 
% \end{theorem}

When $S$ has boundary, the Teichm\"uller
space $\Teich(S)$ parametrizes finite area marked hyperbolic structures on
$\interior(S)$ up to marking preserving isometry.  We take
$\area(S)$ to denote the Poincar\'e
area of any $X \in \Teich(S)$, namely 
$$\area(S) = 2\pi(2g -2 +n).$$
Then given $X$, $Y \in \Teich(S)$  one may consider the {\em
  normalized Weil-Petersson distance} 
$$d_{\rm WP^*}(X,Y) = \frac{d_{\rm WP}(X,Y)}{\sqrt{\area(S)}}.$$
Passage to finite covers of $S$ yields an isometry of normalized
Weil-Petersson metrics, as is the case with the Teichm\"uller metric.

Then by an application of the Cauchy-Schwarz inequality (see
\cite{Linch:wp}), we have for each $X$, $Y$ in $\Teich(S)$ the bound
$$d_{\rm WP^*}(X,Y) \le   d_T(X,Y)$$
from which we conclude 
$$\| \psi \|_{\rm WP^*} \le  \| \psi \|_T,$$
where $\|\psi \|_{\rm WP^*}$ denotes the translation distance of
$\psi$ in the normalized Weil-Petersson metric.
As it follows from Theorem~\ref{translation bound} that
$$\vol(M_\psi) \le \frac{3}{2} \area(S) \, \| \psi \|_{\rm WP^*}
\le \frac{3}{2} \area(S) \, \| \psi \|_T,$$ 
we recover the
Theorem of \cite{Kojima:Mcshane:volumes} concerning volumes and
Teichm\"uller translation distance for arbitrary compact $S$.  

We note that the study of normalized entropy and dilatation has seen
considerable interest of late, note in particular the papers of
\cite{Agol:Leininger:Margalit:pseudo}, and
\cite{Farb:Leininger:Margalit:volume} which have greatly improved our
understanding of fibered 3-manifolds of low dilatation.  The work of
\cite{Kojima:Mcshane:volumes} has been particularly important here,
giving a new proof of the finiteness theorem of
\cite{Farb:Leininger:Margalit:volume}, that all mapping tori
arising from pseudo-Anosov monodromy of {\em bounded dilatation} arise
from Dehn filling of those from a finite list.  
We remark that an analogous Theorem where a bound on the normalized
Weil-Petersson translation distance replaces a bound on the dilataion
is immediate from \cite{Brock:3ms1}, together with J\o rgensen's
Theorem \cite{Thurston:book:GTTM}.

 We will focus our attention
primarily on implications for Teichm\"uller space with the
Weil-Petersson metric.

\bold{Weil-Petersson geometry.}  With the abundance of connections
between Weil-Petersson geometry and 3-dimensional hyperbolic volume,
one might wonder about a more direct connection.  Indeed, in
\cite{Manin:Marcolli} Manin and
Marcolli raise the expectation of an exact formula relating the two,
but in joint work of the first author with Juan Souto
\cite{Brock:Souto:exact} it is shown that there is no continuous
function $f \colon \reals \to \reals$ so that $f(\vol(M_\psi)) =
\|\psi\|_{\rm WP}$ (and likewise for the quasi-Fuchsian case).

Nevertheless, for a given $S$, the first author and Yair Minsky show
the following further similarity with the distribution of lengths of
hyperbolic volumes \cite{Brock:Minsky:spectrum}.
\begin{theorem}{spectrum} \cite{Brock:Minsky:spectrum} {\sc (Length Spectrum)}
The extended Weil-Petersson geodesic length spectrum of $\calM(S)$  is
a well ordered subset of $\reals$, with order type $\omega^\omega$.
\end{theorem}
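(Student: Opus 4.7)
The plan is to establish Theorem \ref{spectrum} by induction on the complexity $\xi(S) = 3g - 3 + n$, exploiting the stratified structure of the Weil--Petersson completion $\overline{\Teich(S)}$. Recall this completion is obtained by adjoining noded strata indexed by multicurves $\sigma \subset S$, with each stratum WP-isometric to a product $\prod_i \Teich(Y_i)$ over the components of $S \setminus \sigma$. The \emph{extended} length spectrum of $\calM(S)$ records not only translation lengths of closed WP geodesics in $\calM(S)$, but also limits of such lengths realized as sums of translation lengths of pseudo-Anosovs supported on the factors of boundary strata, which appear as limits of honest closed geodesic lengths by Wolpert's non-refraction principle.

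First I would establish well-ordering by ruling out infinite strictly decreasing sequences. If $\ell_n \searrow \ell_\infty$ with representative closed geodesics $\gamma_n$ in $\calM(S)$, then by the bound of Brock--Farb and Cavendish--Parlier on the Weil--Petersson diameter of the thick part, each $\gamma_n$ either spends a definite fraction of its length in a fixed compact thick region or else dwells near a boundary stratum. In the first case, compactness modulo $\Mod(S)$ produces a limiting closed geodesic whose length equals $\ell_\infty$, contradicting strict decrease. In the second case, one passes to a subsequence whose representatives concentrate near a single boundary stratum corresponding to a lower-complexity product, and the inductive hypothesis applied to that product rules out the infinite descent.

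To pin down the order type, I would organize the spectrum by its recursive structure. The extended spectrum of $\calM(S)$ partitions into the discrete set of primitive closed WP geodesic lengths in $\calM(S)$ itself (order type $\omega$ by the first step), together with a countable collection of pieces indexed by topological types of multicurves, of which there are finitely many modulo $\Mod(S)$. Each piece is the spectrum of a product of lower-complexity moduli spaces, and standard results on length spectra of metric products reduce its order type to ordinal sums of the factor spectra. By induction on $\xi(S)$ each factor spectrum has countable order type strictly less than $\omega^\omega$; ordinal arithmetic under countable unions and addition then shows the total order type of the extended spectrum is the supremum of $\omega^k$ over finite $k$, namely $\omega^\omega$.

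The main obstacle lies in controlling how primitive closed geodesic lengths accumulate onto lengths arising from boundary strata: one must verify that the only accumulation points of primitive lengths are the sums of lengths realized on strata, and that no pathological accumulation occurs within a single complexity level. This is handled by Wolpert's asymptotic expansion for the WP metric near boundary nodes, which controls both the geometry of geodesic segments approaching a node and the convergence rate of their projection to the limiting stratum, together with the non-refraction theorem guaranteeing that a limit of closed geodesics meeting a stratum transversely must in fact lie inside it. Once this accumulation picture is in place, the passage to the ordinal $\omega^\omega$ is a matter of standard transfinite arithmetic.
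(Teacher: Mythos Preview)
The paper does not prove this theorem. Theorem~\ref{spectrum} is stated with an explicit citation to \cite{Brock:Minsky:spectrum} and is included purely as context, to motivate the interest in explicit lower bounds on the Weil--Petersson systole. There is no proof of it anywhere in the paper, so there is nothing to compare your proposal against.

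That said, your sketch is broadly in the spirit of the Brock--Minsky argument, which does proceed by induction on complexity via the stratification of $\overline{\Teich(S)}$. One point worth flagging: the base of the induction, namely discreteness of the set of pseudo-Anosov translation lengths on a fixed $S$, is not something you can extract from a soft compactness-of-the-thick-part argument as you suggest. In \cite{Brock:Minsky:spectrum} this step relies on the comparison between $\|\psi\|_{\rm WP}$ and $\vol(M_\psi)$ from \cite{Brock:3ms1}, together with the J\o rgensen--Thurston theorem that hyperbolic volumes form a well-ordered set. Your proposed dichotomy (``definite fraction of length in the thick part'' versus ``dwells near a stratum'') does not by itself rule out an infinite sequence of distinct pseudo-Anosov axes with lengths decreasing to a positive limit while staying in the thick part; compactness only gives a limiting geodesic, not that the limit length is attained by one of the $\gamma_n$. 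The volume input is what actually forces discreteness there.
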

Here, the extended length spectrum refers to the set of lengths of
closed geodesics together with lengths of {\em extended mapping
  classes}, automorphisms of a Teichm\"uller-Coxeter complex
introduced by Yamada, where Dehn-twist iterations can take infinite
powers. Such limiting elements behave as {\em billiard paths} on the
moduli space with the Weil-Petersson completion, intersecting the
compactification divisor with equal angle of incidence and reflection
(see \cite{Wolpert:compl}, \cite{Yamada:coxeter}).

It is natural to speculate regarding the value of the bottom of this
spectrum, or the {\em systole} of the moduli space $\calM(S)$ with the
Weil-Petersson metric: Theorem~\ref{translation bound} gives the first
explicit estimates on the value of the systole. It was shown by Gabai,
Meyerhoff and Milley \cite{Gabai:Meyerhoff:Milley}, that the smallest
volume closed orientable hyperbolic 3-manifold is the Weeks manifold
$\calW$, obtained by $(5,2)$ and $(5,1)$ Dehn surgeries on the
Whitehead link.  An explicit formula for its volume is given by
$$\vol(\calW) = \frac{3 \cdot 23^{3/2} \zeta_k(2)}{4 \pi^4}.$$

Applying Theorem~\ref{translation bound}, we 
conclude the following lower bound on the Weil-Petersson systole of
$\calM(S)$ for $S$ a closed surface.

\begin{theorem}{wp systole closed}{\sc (Weil-Petersson Systole -
    Closed Case)}
Let $S$ be a closed surface with genus $g>1$, and let $\gamma$ be the shortest
 closed Weil-Petersson geodesic in the moduli space $\calM(S)$.
Then we have
$$\frac{\vol(\calW)}{3 \sqrt{\pi(g-1)}} \le 
\ell_{\rm   WP}(\gamma).$$
\end{theorem}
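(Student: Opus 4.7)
The plan is to derive this as a direct corollary of Theorem~\ref{translation bound} specialized to the closed case $n=0$, combined with the Gabai--Meyerhoff--Milley identification of the Weeks manifold as the smallest volume closed orientable hyperbolic 3-manifold.

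First I would interpret the shortest closed Weil-Petersson geodesic $\gamma$ as coming from a pseudo-Anosov mapping class: by \cite{Daskalopoulos:Wentworth:mcg}, a pseudo-Anosov $\psi \in \Mod(S)$ realizes its translation length $\|\psi\|_{\rm WP}$ on an invariant axis in $\Teich(S)$, and the projection of such an axis to $\calM(S)$ is a closed Weil-Petersson geodesic of length $\|\psi\|_{\rm WP}$. Any closed geodesic in the smooth part of $\calM(S)$ arises this way, so the shortest one corresponds to some pseudo-Anosov $\psi$ with
$$\ell_{\rm WP}(\gamma) = \|\psi\|_{\rm WP}.$$

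Next, since $S$ is closed of genus $g>1$ and $\psi$ is pseudo-Anosov, Thurston's hyperbolization theorem for fibered 3-manifolds ensures that the mapping torus $M_\psi$ is a closed orientable hyperbolic 3-manifold. The main result of \cite{Gabai:Meyerhoff:Milley} then gives
$$\vol(\calW) \le \vol(M_\psi).$$
Now apply Theorem~\ref{translation bound} with $n=0$, so that $2g-2+n = 2(g-1)$ and the bound reads
$$\vol(M_\psi) \le 3\sqrt{\tfrac{\pi}{2}\cdot 2(g-1)}\,\|\psi\|_{\rm WP} = 3\sqrt{\pi(g-1)}\,\|\psi\|_{\rm WP}.$$
Chaining the two inequalities and dividing through by $3\sqrt{\pi(g-1)}$ yields
$$\frac{\vol(\calW)}{3\sqrt{\pi(g-1)}} \le \|\psi\|_{\rm WP} = \ell_{\rm WP}(\gamma),$$
as desired.

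Given how cleanly the statement reduces to a substitution, there is no real obstacle in the deduction itself; the only point that merits explicit mention is the identification of the systole with a pseudo-Anosov translation length, which is where the Daskalopoulos--Wentworth realization theorem is essential (a reducible or finite-order class could not produce a closed geodesic in the interior of $\calM(S)$ to which Theorem~\ref{translation bound} applies). All of the mathematical content is packaged into the already-established Theorem~\ref{translation bound} and the Gabai--Meyerhoff--Milley theorem.
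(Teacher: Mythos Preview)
Your proposal is correct and matches the paper's approach: the paper presents this theorem as an immediate consequence of Theorem~\ref{translation bound} (specialized to $n=0$) together with the Gabai--Meyerhoff--Milley result on $\vol(\calW)$, and your write-up simply spells out this deduction, including the identification of the systole with a pseudo-Anosov translation length via Daskalopoulos--Wentworth.
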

We remark that a recent result of Agol,
Leininger and Margalit \cite{Agol:Leininger:Margalit:pseudo}
provides an upper bound:
$$\ell_{\rm   WP}(\gamma)
\le 
\frac{2 \sqrt{\pi} \log(\frac{3+\sqrt{5}}{2})}{\sqrt{(g-1)}}.$$

Similarly, Cao and Meyerhoff \cite{Cao:Meyerhoff:minvol} show that the smallest volume orientable cusped
hyperbolic 3-manifold is the figure eight knot complement which has
volume $ 2 \calV_3$ where $\calV_3$ is the volume of
the regular ideal hyperbolic tetrahedron.
An application of this bound yields a similar result for the
Weil-Petersson systole of the moduli space of punctured surfaces.
\begin{theorem}{wp systole punctured}{ \sc (Weil-Petersson Systole -
    Punctured Case)}
Let $S$ be a surface of genus $g$ with $n>0$ boundary components
and
$\chi(S) < 0$, and let $\gamma$ be the shortest Weil-Petersson 
geodesic in the moduli space $\calM(S)$.
Then we have
$$\frac{ 2 \calV_3} {3 \sqrt{ \frac{\pi}{2}  (2g -2
      +n)}}
\le \ell_{\rm WP}(\gamma).$$
\end{theorem}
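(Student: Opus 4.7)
The plan is to mirror the argument sketched for Theorem~\ref{wp systole closed}, substituting the Cao--Meyerhoff minimal cusped volume estimate in place of the Gabai--Meyerhoff--Milley minimal closed volume estimate. The first step is to identify the shortest Weil--Petersson closed geodesic $\gamma$ in $\calM(S)$ with the axis of a pseudo-Anosov mapping class. Closed Weil--Petersson geodesics in $\calM(S)$ correspond to conjugacy classes of pseudo-Anosov elements $\psi \in \Mod(S)$, with $\ell_{\rm WP}(\gamma) = \|\psi\|_{\rm WP}$ by the existence of a minimizing axis due to Daskalopoulos--Wentworth~\cite{Daskalopoulos:Wentworth:mcg}. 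The existence of a genuine minimum over all such $\gamma$ is guaranteed by the well-ordering of the Weil--Petersson length spectrum (Theorem~\ref{spectrum}), so the infimum is realized.

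Next, I would form the hyperbolic mapping torus $M_\psi$ and observe the key structural point distinguishing the punctured case from the closed case: since $S$ has $n \ge 1$ punctures, the manifold $M_\psi$ is a finite-volume, orientable, non-compact hyperbolic 3-manifold, with a toroidal cusp associated to each puncture orbit of $\psi$ on $\bdry S$. The Cao--Meyerhoff theorem~\cite{Cao:Meyerhoff:minvol} then applies and yields the lower bound
$$\vol(M_\psi) \ge 2 \calV_3,$$
where $2\calV_3$ is the volume of the figure-eight knot complement.

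With this lower bound in hand, I would simply apply Theorem~\ref{translation bound} to $\psi$, which gives the upper bound
$$\vol(M_\psi) \le 3 \sqrt{\frac{\pi}{2}(2g - 2 + n)} \, \|\psi\|_{\rm WP} = 3 \sqrt{\frac{\pi}{2}(2g - 2 + n)} \, \ell_{\rm WP}(\gamma).$$
Combining the two inequalities and rearranging yields the stated bound
$$\frac{2\calV_3}{3\sqrt{\frac{\pi}{2}(2g - 2 + n)}} \le \ell_{\rm WP}(\gamma).$$

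There is no real obstacle here beyond assembling the ingredients correctly; the only point requiring care is verifying that $M_\psi$ actually has cusps (so that Cao--Meyerhoff, rather than Gabai--Meyerhoff--Milley, is the relevant lower bound). This is automatic from $n > 0$: each boundary component of $S$ contributes a torus boundary component to the compactified mapping torus, which becomes a cusp in the hyperbolic structure. The argument thus runs in complete parallel to the closed case, with the punctured-surface strengthening of Theorem~\ref{translation bound} providing exactly the scaling factor $\sqrt{(2g-2+n)/2}$ that appears in the conclusion.
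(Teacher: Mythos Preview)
Your proposal is correct and follows exactly the approach the paper intends: the paper states the theorem as an immediate application of the Cao--Meyerhoff minimal cusped volume bound together with Theorem~\ref{translation bound}, and you have simply spelled out the details (the identification of closed Weil--Petersson geodesics with pseudo-Anosov axes, and the observation that $n>0$ forces $M_\psi$ to be cusped). There is nothing to add.
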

\noindent Known upper bounds require a more involved discussion, which
we omit here.

\bold{The Weil-Petersson inradius of Teichm\"uller space.} 
It is remarkable that even to estimate the distance between nodal
surfaces at infinity in the Weil-Petersson metric has been an elusive
problem.  Theorem~\ref{translation bound} provides the first
explicit means by which to do this, through a limiting process
involving Dehn-twist iterates about a longitude-meridian pair
$(\alpha,\beta)$ on the punctured torus.  

Specifically, letting $S$ be the one-holed torus, we identify the
upper-half-plane $\half^2$ with $\Teich(S)$. Then $\Mod(S) =
\SL_2(\zed)$ acts by isometries, and we consider the family
$$\psi_n = \tau_\alpha^n \compos \tau_\beta^{-n}$$ 
of composed $n$-fold Dehn-twists about simple closed curves $\alpha$ and
$\beta$ on $S$ with $i(\alpha,\beta) = 1$ on $S$. Up to conjugation,
we have
$$\psi_n = 
\left[ 
\begin{array}{cc}
1 & n \\
0 & 1
\end{array}
\right]
\left[
\begin{array}{cc}
1 & 0 \\
n & 1
\end{array}
\right]
$$
in $\SL_2(\zed)$.
Then Theorem~\ref{translation bound} gives 
$$\vol(M_{\psi_n}) \le 
3
\sqrt{\frac{\pi}{2} } 
\|\psi_n\|_{\rm WP}.$$
The left hand side converges to $2 \calV_8$ or twice the volume of the
ideal hyperbolic octahedron, while the right hand side converges to
twice the Weil-Petersson length of the imaginary axis, in the upper
half plane $\half^2$.

Then we obtain:
\begin{theorem}{farey edge}{\sc (Weil-Petersson Inradius)}
Let $S= S_{1,1}$ be the one-holed torus.  The Weil-Petersson length of
the imaginary axis $I$ satisfies 
$$\frac{1}{3}\sqrt{ \frac{2}{\pi}} \calV_8 \le 
\ell_{\rm  WP}(I) \le
2 \sqrt{ 30}\, \pi^\frac{3}{4}.$$
\end{theorem}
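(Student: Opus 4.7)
\medskip
\noindent\textit{Proof plan.}
The plan has two parts: the lower bound comes from passing to the limit $n \to \infty$ in the inequality supplied by Theorem~\ref{translation bound} applied to $\psi_n$, and the upper bound from a direct Weil--Petersson length calculation along $I$.

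For the lower bound, apply Theorem~\ref{translation bound} to $S = S_{1,1}$ (so $2g-2+n = 1$) to obtain
$$\vol(M_{\psi_n}) \le 3\sqrt{\pi/2}\,\|\psi_n\|_{\rm WP}$$
for every $n$, and take the limit using the two convergences stated just before the theorem: $\vol(M_{\psi_n}) \to 2\calV_8$ and $\|\psi_n\|_{\rm WP} \to 2\ell_{\rm WP}(I)$. The first follows from Thurston--J\o rgensen continuity of volume under geometric limits, once one identifies the $M_{\psi_n}$ as Dehn fillings of a common cusped hyperbolic $3$-manifold of volume $2\calV_8$ built by gluing two ideal octahedra. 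The second requires checking that, as $n \to \infty$, the fixed points $(n\pm\sqrt{n^2+4})/2$ of $\psi_n$ on $\boundary\half^2$ converge to $\infty$ and $0$, so that the Weil--Petersson axis of $\psi_n$ converges to the imaginary axis and a fundamental domain for $\psi_n$ asymptotically sweeps out $I$ twice, once in each direction, as a billiard path in the Weil--Petersson completion of $\calM(S_{1,1})$. Passing to the limit yields $2\calV_8 \le 6\sqrt{\pi/2}\,\ell_{\rm WP}(I)$, which is the claimed lower bound.

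For the upper bound, parametrize $I$ by $\tau = it$ with $t \in (0,\infty)$ and use the isometric involution $\tau \mapsto -1/\tau \in \Mod(S_{1,1})$, which restricts to $t \mapsto 1/t$ on $I$ and fixes $\tau = i$, to reduce the calculation to bounding the Weil--Petersson length of $\{it : t \ge 1\}$ and doubling. Along this half-ray the curve $\alpha$ pinches as $t \to \infty$, and two complementary tools are available: Wolpert's expansion of the Weil--Petersson metric in the pinching coordinate $u \sim \sqrt{\ell_\alpha}$, and the explicit description of the Weil--Petersson metric on $\Teich(S_{1,1})$ obtained via its relationship to $\Teich(S_{0,4})$ by a branched double cover, where it can be written against the hyperbolic area form on a four-punctured sphere. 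Either route produces an explicit pointwise bound on $\|\partial/\partial t\|_{\rm WP}$; integrating and doubling yields the stated constant $2\sqrt{30}\,\pi^{3/4}$.

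The main obstacle is justifying the convergence $\|\psi_n\|_{\rm WP} \to 2\ell_{\rm WP}(I)$. Geometric convergence of the axes in $\half^2$ is straightforward, but controlling the translation length as a Weil--Petersson distance requires the billiard-path formalism of Wolpert \cite{Wolpert:compl} and Brock--Minsky \cite{Brock:Minsky:spectrum} to identify the limit of the axis, in the Weil--Petersson completion, with a doubled copy of $I$; this is where the factor of two enters. A secondary difficulty is extracting the specific constant $2\sqrt{30}\,\pi^{3/4}$, rather than a less sharp value, from the pinching expansion or the four-punctured-sphere integral formula.
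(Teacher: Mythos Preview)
Your lower-bound argument is exactly the paper's: apply Theorem~\ref{translation bound} to the family $\psi_n$ on $S_{1,1}$ and let $n\to\infty$, using $\vol(M_{\psi_n})\to 2\calV_8$ and $\|\psi_n\|_{\rm WP}\to 2\ell_{\rm WP}(I)$. The paper simply asserts these two limits in the paragraph preceding the theorem; your discussion of Dehn filling and of the billiard-path limit of the axes supplies the justification the paper leaves implicit.

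For the upper bound the paper takes a shorter route than your direct integration along $I$. It does not bound $\|\partial/\partial t\|_{\rm WP}$ pointwise; instead it invokes Wolpert's inequality (Cor.~4.10 of \cite{Wolpert:behavior})
\[
d_{\overline{\rm WP}}\bigl(X, N(\gamma)\bigr) \le \sqrt{2\pi\,\ell_X(\gamma)},
\]
which bounds the Weil--Petersson distance from $X$ to the nodal surface where $\gamma$ is pinched, and pairs it with a systole estimate drawn from \cite{Cavendish:Parlier:diameter}. Applying this with $\gamma$ the systole at a suitable point of $I$, and using the symmetry you note to handle both ends, yields the constant $2\sqrt{30}\,\pi^{3/4}$ directly. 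Wolpert's inequality is itself proved by integrating the pinching expansion you describe, so your approach is correct in content; the paper's version just packages that integration into a single citable bound and replaces the ``secondary difficulty'' you flag with reading off a systole value.
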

The upper bound arises from estimates on the systole $\sys(X)$ of a hyperbolic
surface $X$, together with Wolpert's upper bound of $\sqrt{2 \pi
  \sys(X)}$ \cite[Cor 4.10]{Wolpert:behavior}
on the distance from $X$ to a nodal surface where a curve of length
$\sys(X)$ on $X$ is pinched to a cusp (see \cite{Cavendish:Parlier:diameter}).

The axis $I$ is isometric to each edge $e$ of
the {\em Farey graph} $\mathbb{F} = \Gamma (I)$, where $\Gamma = \SL_2(\zed)$, joining pairs of
rationals $(\frac{p}{q}, \frac{r}{s})$ with
$$
\left|
\begin{array}{cc}
p & q \\
r & s
\end{array}
\right| = 1$$
\begin{figure}[htb]
\labellist
\small\hair 2pt
 \pinlabel {$\infty$}  at 305 670
 \pinlabel {$-1$}  at 30 400
 \pinlabel {$1$}  at 580 400
 \pinlabel {$0$}  at 305 125
 \pinlabel {$\frac{1}{2}$}  at 530 230
 \pinlabel {$-\frac{1}{2}$}  at 75 230
 \pinlabel {$2$}  at 530 570
 \pinlabel {$-2$}  at 75 570
\endlabellist
\centering
\includegraphics[scale=.3]{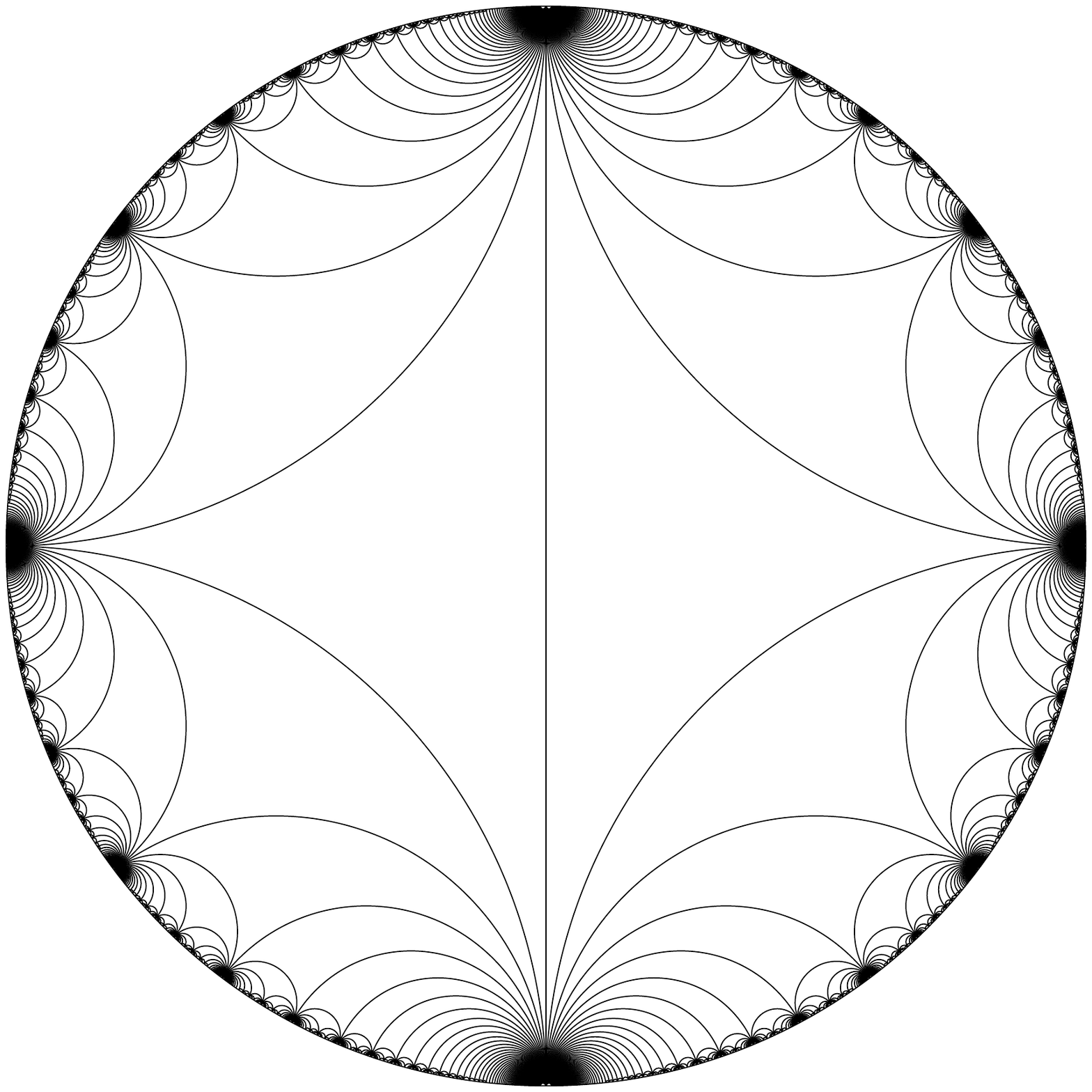}
\caption{The Farey graph under stereographic projection to
   $\Delta$.}
\label{figure:farey}
\end{figure}
or the extended distance
$$d_{\overline{\rm WP}}\left(\frac{p}{q},\frac{r}{s}\right) = \
d_{\overline{\rm WP}}(0,\infty)$$
 between closest points in the {\em completion} 
$$\overline{\Teich(S)}  =  \half^2\cup (\ratls \cup \infty)$$
of the Teichm\"uller space of the punctured torus with the
Weil-Petersson metric (see Figure~\ref{figure:farey}).

\bold{Weil-Petersson diameter of Moduli space.} The length $\ell_{\rm
  WP}(I)$ is instrumental in the estimation of the Weil-Petersson
diameter of moduli space \cite{Cavendish:Parlier:diameter}.  Noting
that the Weil-Petersson length of $I$ in the Teichm\"uller space of
the four-holed sphere is twice its length in the Teichm\"uller space
of the one-holed torus, we may combine Theorem~\ref{farey edge} with
results of \cite{Cavendish:Parlier:diameter} relating this length to
the diameter of moduli space to obtain the following explicit
estimates:
\begin{theorem}{wp diameter}
  Let $S = S_{g,n}$ have $\chi(S) <0$, and let $\calM_{g,n} =
  \calM(S_{g,n})$, the moduli space of genus $g$ Riemann surfaces with
  $n$ punctures. Then we have the following:
$$\diam_{\rm WP}(\calM_{1,1}) \ge \frac{1}{6}\sqrt{\frac{2}{\pi}}
\calV_8, $$
$$\diam_{\rm WP}(\calM_{0,4}) \ge \frac{1}{3}\sqrt{\frac{2}{\pi}}
\calV_8, $$
and otherwise for $3g-3+n \ge 2$,
$$
\diam_{\rm WP}
(\calM_{g,n}) \ge \frac{1}{3 \sqrt{\pi}} \calV_8 \sqrt{2g +n -4}.
$$
% The normalized Weil-Petersson diameter of moduli space satisfies
%  $$\diam_{{\rm WP}^*} (\calM_{g,n}) \ge \frac{\calV_8}{3 \pi \sqrt{2}}.$$
\end{theorem}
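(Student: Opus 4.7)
The plan is to combine Theorem~\ref{farey edge}, which provides an explicit lower bound on the Weil-Petersson length of the imaginary axis $I$ in $\Teich(S_{1,1})$, with the diameter estimates of \cite{Cavendish:Parlier:diameter} that bound $\diam_{\rm WP}(\calM_{g,n})$ from below in terms of $\ell_{\rm WP}(I)$.

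For $\calM_{1,1}$, I would first recall that the Farey axis $I$ is a Weil-Petersson geodesic in $\overline{\Teich(S_{1,1})} = \overline{\half^2}$ connecting the cusps at $0$ and $\infty$, both of which project to the unique nodal point $p_\infty \in \overline{\calM_{1,1}}$ under the $\SL_2(\zed)$-quotient (since $\SL_2(\zed)$ acts transitively on $\ratls \cup \{\infty\}$). The half-axis from $[i]$ to $\infty$ is a fundamental domain for the involution $z\mapsto -1/z$ acting on $I$, and descends to a Weil-Petersson geodesic arc in $\overline{\calM_{1,1}}$ of length $\ell_{\rm WP}(I)/2$; following \cite{Cavendish:Parlier:diameter}, this is in fact the Weil-Petersson distance from $[i]$ to $p_\infty$ in $\overline{\calM_{1,1}}$ (no competing lift of $p_\infty$ gives a shorter distance). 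Since $\calM_{1,1}$ is dense in its completion and hence has the same diameter, Theorem~\ref{farey edge} gives $\diam_{\rm WP}(\calM_{1,1}) \ge \ell_{\rm WP}(I)/2 \ge \tfrac{1}{6}\sqrt{2/\pi}\,\calV_8$. The $\calM_{0,4}$ bound follows by the identical argument, replacing $\ell_{\rm WP}(I)$ by its value on $\Teich(S_{0,4})$, which is twice as large by the doubling via the hyperelliptic covering $S_{1,1}\to S_{0,4}$ noted just before the theorem.

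For the general case $3g-3+n \ge 2$, I would invoke the Cavendish-Parlier construction in its full generality: one selects a collection of roughly $2g+n-4$ pairwise disjoint essential simple closed curves on $S$ and concatenates Farey-edge-type pinching motions along each. Using the near-product structure of the Weil-Petersson metric on neighborhoods of lower-dimensional boundary strata in the sense of Wolpert \cite{Wolpert:behavior}, these motions contribute in a nearly orthogonal fashion, yielding a Pythagorean-style lower bound of order $\ell_{\rm WP}(I)\sqrt{2g+n-4}$ on the Weil-Petersson diameter. Substituting the explicit bound from Theorem~\ref{farey edge} then produces the stated estimate.

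The main obstacle is the general case: one must choose the disjoint curves and their associated pinching paths so that their contributions remain essentially independent, and combine the resulting estimates into a genuine Pythagorean lower bound via the fine structure of the Weil-Petersson metric on collar neighborhoods of boundary strata in $\overline{\calM_{g,n}}$. These technical ingredients are supplied by \cite{Cavendish:Parlier:diameter}, and our contribution is simply to plug in the explicit value for $\ell_{\rm WP}(I)$ coming from Theorem~\ref{farey edge}.
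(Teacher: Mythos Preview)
Your proposal is correct and essentially follows the paper's approach: for $\calM_{1,1}$ the imaginary axis projects $2$-to-$1$ to a geodesic of half its length, for $\calM_{0,4}$ one uses that the Weil-Petersson metric there is twice that of $\calM_{1,1}$, and the general case is deferred to \cite{Cavendish:Parlier:diameter} with Theorem~\ref{farey edge} supplying the explicit constant. The only organizational difference is that for $3g-3+n\ge 2$ the paper does not invoke the Pythagorean near-product argument directly on $S_{g,n}$; instead it observes (citing \cite[Prop.~5.1]{Cavendish:Parlier:diameter}) that $\calM_{0,2g+n}$ sits as a totally geodesic stratum in $\overline{\calM_{g,n}}$, so $\diam_{\rm WP}(\calM_{g,n})\ge\diam_{\rm WP}(\calM_{0,2g+n})$ and one is reduced to the genus-zero case, where the same formula applies since $2\cdot 0+(2g+n)-4=2g+n-4$.
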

\begin{proof}
  The imaginary axis in $\half$ projects 2-to-1 to a geodesic in
  $\calM_{1,1}$ of half its original Weil-Petersson length, which is
  estimated in Theorem~\ref{farey edge}. The Weil-Petersson metric on
  $\calM_{0,4}$ is isometric to twice that of $\calM_{1,1}$. The
  general estimate follows from the totally geodesically embedded
  $\calM_{0,2g+n}$ strata in the completion of $\calM_{g,n}$, as
  observed in \cite[Prop. 5.1]{Cavendish:Parlier:diameter}.
\end{proof}

\noindent We note that dividing by $\area(S)$ gives an explicit, positive
lower bound to the normalized Weil-Petersson diameter 
of moduli spaces $\calM(S)$ that is independent of $S$.

\bold{History.}  The original version of \cite{Kojima:Mcshane:volumes}
relied without proof on a remark in \cite{Brock:3ms1} suggesting a
proof of Theorem~\ref{main} should be possible using the idea of {\em
  geometric inflexibility} from \cite{McMullen:book:RTM} and
\cite{Brock:Bromberg:inflexible}.  The present paper supplies such a
proof, as a means toward employing Schlenker's improvement
\cite[Cor. 1.4]{Schlenker:volume} to the upper bound in
\cite[Thm. 1.2]{Brock:wp} to obtain new explicit estimates on the
Weil-Petersson geometry of Teichm\"uller and moduli space.  After we
presented our arguments in Curt McMullen and Martin Bridgeman's
Informal Seminar at Harvard, a revision to
\cite{Kojima:Mcshane:volumes} presented an independent proof of
Theorem~\ref{main} (as well as version of Theorem~\ref{translation
  bound} restricted to closed surfaces) and McMullen provided a
succinct argument for a slightly weaker version of Theorem~\ref{main}
directly from Thurston's {\em Double Limit Theorem} and the strong
convergence of $Q_n$ to the fiber $Q_\infty$.  His argument appears in
his Seminar Notes available on his webpage, together with other
estimates for $L^p$ metrics on Teichm\"uller space (and alongside
notes from our lecture). We have retained the inflexibility approach
here to illustrate its utility.

\bold{Acknowledgements.}  We are grateful to Greg McShane and
Sadayoshi Kojima for bringing this potential application of
inflexibility to our attention.  We also thank Curt McMullen and
Martin Bridgeman for suggestions and corrections and for giving us the
opportunity to present our arguments in their Informal Seminar, and
McMullen for suggesting the inclusion of the upper bound for
Weil-Petersson systole. We also thank
Chris Leininger for helpful conversations and Jean-Marc Schlenker for
his comments and corrections to estimates in \cite{Schlenker:volume}.
\section{Preliminaries}
We review background for our results.

\bold{Weil-Petersson geometry.}  The above results give new explicit
estimates on the geometry of Teichm\"uller space with the
Weil-Petersson metric.  The Weil-Petersson metric arises from the
hyperbolic $L^2$-norm on the space of quadratic differentials $Q(X)$
on a Riemann surface $X$, given by 
$$ \| \varphi\|^2_{\rm WP} = \int_X \frac{|\varphi|^2}{\rho_X}.$$
Though known to be geodesically convex \cite{Wolpert:Nielsen} it is not
complete \cite{Wolpert:noncompleteness,Chu:noncompleteness}.  It
has negative curvature \cite{Ahlfors:curvature}, but its curvatures
are bounded away neither from $0$ nor negative infinity. In 
\cite{Daskalopoulos:Wentworth:mcg}, 
 Daskalopoulos and
Wentworth showed that a pseudo-Anosov automorphism $\psi \in \Mod(S)$
has an invariant axis along which $\psi$ translates.  A primitive
pseudo-Anosov element $\psi \in \Mod(S)$, therefore, determines a
closed Weil-Petersson geodesic in the moduli space of Riemann surfaces
$\calM(S)$.

\bold{The complex of curves.}  Let $S$ be a compact
surface of genus $g$ with $n$ boundary components.  The complex of curves $\calC(S)$ is a $3g-4$ dimensional
complex, each vertex of which is associated to a simple closed curve
on the surface $S$ up to isotopy, and so that $k$-simplices span
collections of $k+1$ vertices whose associated curves are disjoint.
Masur and Minsky proved $\calC^1(S)$ is a $\delta$-hyperbolic metric
space with the distance $d_{\calC}(.,.)$ given by the edge metric.

Given $S$ there is a an $L_S$ so that for each $X \in \Teich(S)$ there
is a $\gamma \in \calC(S)$ so that $\ell_X(\gamma) < L_S$.  By making
such a choice of $\gamma$ for each $X$ we obtain a coarsely
well-defined projection
$$\pi_\calC \colon \Teich(S) \to \calC^0(S).$$
We refer to the distance between a point $X$ in Teichm\"uller space and a
curve $\gamma$ in $\calC^0(S)$ with the notation:
$$d_\calC(X, \gamma) = d_\calC(\pi_\calC(X),\gamma).$$

\bold{Quasi-Fuchsian manifolds.} Each pair $(X,Y) \in \Teich(S)
\times \Teich(S)$ determines a quasi-Fuchsian {\em simultaneous
  uniformization} $Q(X,Y)$ with $X$ and $Y$ in its conformal boundary.
This is the quotient
$$Q(X,Y) = \half^3 / \rho_{X,Y}(\pi_1(S))$$ of a quasi-Fuchsian
representation of the fundamental group 
$$\rho_{X,Y} \colon \pi_1(S) \to \PSL_2(\cx).$$

The quasi-Fuchsian representations sit as the interior of the space
$\AH(S)$ of all marked hyperbolic 3-manifolds homotopy equivalent to
$S$ up to marking-preserving isometry, with the topology of
convergence on generators of the fundamental group. For more
information, see
\cite{Bromberg:bers,Brock:Bromberg:density,Brock:Canary:Minsky:elc,Agol:tame,Calegari:Gabai:tame}.

A complete hyperbolic 3-manifold $M$, marked by a homotopy equivalence 
$$f \colon S \to M$$ determines a point in $\AH(S)$ up to equivalence -
we denote such a marked hyperbolic 3-manifold by the pair $(f, M)$.
Equipping $M$ with a baseframe $(M,\omega)$ determines a specific
representation $\rho \colon \pi_1(S) \to \PSL_2(\cx)$ and a Kleinian
surface group 
$$\Gamma = \rho(\pi_1(S)).$$

The geometric topology on such based hyperbolic 3-manifolds records
geometric information: a sequence $(M_n , \omega_n )$ converges to
$(M,\omega_\infty)$ if for each $\epsilon, R>0$ there is an $N > 0 $, and for
all $n >N$ we have
embeddings $$\varphi_n \colon (B_R(\omega),\omega) \to
(M_n,\omega_n)$$ 
from the $R$-ball around $\omega$ to $M_n$, whose derivatives send
$\omega_\infty$ to 
$\omega_n$ and whose bi-Lipschitz constants are at most $1+\epsilon$
at all points of $B_R(\omega_\infty)$.  

The convergent sequence $(f_n, M_n) \to (f_\infty, M_\infty)$ in
$\AH(S)$ converges {\em strongly} if there are baseframes $\omega_n$
in $M_n$ and $\omega_\infty \in M_\infty$ so that the resulting
$\rho_n$ converge to the resulting $\rho_\infty$ on generators, and
the manifolds $(M_n,\omega_n)$ converge geometrically to
$(M_\infty,\omega_\infty)$.

\bold{Convex core width.}
Given $M \in \AH(S)$, let $d_M(U,V)$ be the minimal distance between subsets $U$ and $V$ in $M$. We prove the following in \cite{Brock:Bromberg:inflexible}.
\begin{theorem}{distance in manifold}
Given $\epsilon$, $L>0$, there exist $K_1$ and $K_2$ so that if $M \in
\AH(S)$  and
$\alpha$ and $\beta$ in $\calC^0(S)$ have representatives $\alpha^*$
and $\beta^*$ with
$\ell_M(\alpha^*)$ and $\ell_M(\beta^*) $ bounded above by $L$ and
below by $\epsilon$, then
$$d_M(\alpha,\beta) \ge K_1 d_\calC(\alpha, \beta) -K_2$$
\end{theorem}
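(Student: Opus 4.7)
The plan is to interpolate along a shortest path in $M$ from $\alpha^*$ to $\beta^*$ by pleated surfaces realizing bounded-length curves, and invoke the standard fact that the realized curves of two pleated surfaces whose thick-part images meet lie at uniformly bounded distance in $\calC(S)$. The two-sided length hypotheses $\epsilon \le \ell_M(\alpha^*), \ell_M(\beta^*) \le L$ ensure that both endpoints lie on pleated surfaces that already realize $\alpha$ and $\beta$ respectively, so the interpolation starts at $\alpha$ and ends at $\beta$ in $\calC^0(S)$.

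First, I would fix a Margulis constant $\mu$ and invoke the standard pleated surface construction: there is $L_0 = L_0(\mu)$ and $D_0 = D_0(\mu)$ such that any point in the $\mu$-thick part of $M$ lies in a pleated surface realizing a simple closed curve of length at most $L_0$, and the image of any such pleated surface has diameter at most $D_0$ on its $\mu$-thick part. Let $\gamma$ be a shortest path in $M$ from $\alpha^*$ to $\beta^*$, of length equal to $d_M(\alpha,\beta)$. Partition $\gamma$ into maximal subarcs lying in the $\mu$-thick part and maximal subarcs lying inside Margulis tubes of short curves $\eta_1, \ldots, \eta_k$. Along each thick-part subarc, sample points at spacing $D_0/2$ and, at each sampled point, choose a bounded-length curve realized by a pleated surface through that point, yielding a sequence $c_0 = \alpha, c_1, \ldots, c_N = \beta$.

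Consecutive curves $c_i, c_{i+1}$ in a common thick-part subarc are realized by pleated surfaces whose thick-part images overlap; a standard bounded-intersection estimate shows $d_\calC(c_i, c_{i+1}) \le C_1$ for some $C_1 = C_1(\mu, L_0)$. When $\gamma$ traverses the Margulis tube of $\eta_j$, the core curve $\eta_j$ is a vertex of $\calC^0(S)$, and any bounded-length curve realized on a pleated surface meeting a neighborhood of the tube boundary has uniformly bounded intersection with $\eta_j$, hence uniformly bounded curve-complex distance from $\eta_j$. The entry curve $c_{\rm in}$ and the exit curve $c_{\rm out}$ therefore satisfy $d_\calC(c_{\rm in}, c_{\rm out}) \le 2C_1$, independent of how much time $\gamma$ spends inside the tube. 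The number of samples plus tube crossings is at most $(2/D_0) \cdot d_M(\alpha, \beta) + O(1)$, and summing the per-step bound yields $d_\calC(\alpha,\beta) \le K_1^{-1} d_M(\alpha,\beta) + K_2/K_1$, which is the stated inequality.

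The main obstacle is the thin-part bookkeeping: one must verify that every Margulis tube crossed by $\gamma$ contributes a bounded additive error rather than an unbounded one, even when the tube is extremely long. This reduces to showing that both the incoming and outgoing thick-part pleated surfaces carry a bounded-length curve within bounded curve-complex distance of the tube core; this in turn is a consequence of the Margulis lemma together with the fact that a bounded-length curve passing near a Margulis tube either coincides with or has bounded intersection with the tube core. A secondary technical point is ensuring that when $\alpha^*$ or $\beta^*$ itself runs through thin tubes of auxiliary short curves, the initial and final pleated surfaces are chosen to realize $\alpha$ and $\beta$ directly, so that no additive loss is incurred at the endpoints beyond that absorbed into $K_2$.
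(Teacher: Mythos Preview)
The paper does not actually prove this theorem: it is quoted from \cite{Brock:Bromberg:inflexible} (see the sentence ``We prove the following in \cite{Brock:Bromberg:inflexible}'' immediately preceding the statement). So there is no proof in the present paper against which to compare your proposal.

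That said, your outline is essentially the standard pleated-surface interpolation argument that underlies this type of estimate (and is indeed the approach taken in \cite{Brock:Bromberg:inflexible}, going back to Minsky's work). A couple of points deserve sharpening. First, the assertion that ``any point in the $\mu$-thick part of $M$ lies in a pleated surface realizing a simple closed curve of length at most $L_0$'' is not automatic for an arbitrary point of $M^{\ge\mu}$; what one actually uses is a continuous interpolation of pleated (or simplicial hyperbolic) surfaces between one realizing $\alpha$ and one realizing $\beta$, whose images sweep through the region containing the geodesic $\gamma$. This is where tameness of $M\in\AH(S)$ enters. Second, your thin-part step is correct in conclusion but the justification should be phrased on the surface rather than in $M$: when a pleated surface meets the Margulis tube of $\eta_j$, the curve $\eta_j$ is short in the induced hyperbolic metric on $S$, hence lies within bounded $\calC(S)$-distance of any bounded-length curve on that surface. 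The ``bounded intersection with the tube core'' heuristic in $M$ is not by itself enough, since the core can be arbitrarily short. With these adjustments your sketch goes through and matches the cited argument.
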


It is due to Bers that 
$$2 \ell_X(\gamma) \ge \ell_{Q(X,Y)}(\gamma).$$ Thus
Theorem~\ref{distance in manifold} serves to bound from below the
width of the convex core of $Q(X,Y)$ (the distance between its
boundary components) in terms of the curve complex distance.   Such
convex core width estimates will be important to our application of the
inflexibility theory outlined in the next section.  

% \begin{theorem}{north-south}
% Let $\psi \in \Mod(S)$ be pseudo-Anosov, 
%  with $[\mu^+]$ and $[\mu^-]$
% the attracting and repelling laminations in $\pml(S)$.  Then there is
% a $K_\psi$ depending only on $\psi$ so that for any $B,D>0$ the following holds.
% \begin{enumerate} 
 
% \item There exist
% neighborhoods $V$ and $U$ of $[\mu^+]$ and $[\mu^-]$ in $\pml(S)$ so
% that for any $\alpha \in V$ and $\beta \in U$ we
% have $$d_{\cC}(\alpha, \psi^n(\beta)) \ge K_\psi n +B.$$
% \item There exists a subset $W$ in $\pml(S) \backslash (U \cup V)$
%   such that any path in $\cC(S)$ from a curve in $U$ to a curve in $V$
%   contains a subpath of length at least $D$.
% \item  For each $\alpha \in U$,
% $\beta \in V$ and $\gamma \in W$ we have 
% $$d_{\cC}(\gamma, \psi^{-n} (\alpha)) \ge K_\psi n + B
% \ \ \ \text{and} \ \ \
% d_{\cC}(\gamma, \psi^n (\beta)) \ge K_\psi n +B.$$
% % \item Furthermore, given any curve $\beta$, the sets $U$, $V$, and $W$
% %   may be taken so that any one of them contains $\beta$.
% \end{enumerate}
% \end{theorem}

\section{Geometric Inflexibility} 
To prove Theorem~\ref{main}, our key tool will be the {\em inflexibility theorem} of
\cite{Brock:Bromberg:inflexible}.
\begin{theorem}{inflexible} {\sc (Geometric Inflexibility)}
Let $M_0$ and $M_1$ be complete hyperbolic structures on a
$3$-manifold $M$ so that $M_1$ is a $K$-quasi-conformal deformation of
$M_0$, $\pi_1(M)$ is finitely generated, and $M_0$ has no rank-one
cusps.  

There is a volume preserving $K^{3/2}$-bi-Lipschitz diffeomorphism
$$\Phi \colon M_0 \to M_1$$ 
whose pointwise bi-Lipschitz constant satisfies
$$\log \bilip (\Phi, p) \le C_1 e^{-C_2 d(p, M_0 \setminus
  \core(M_0))}$$ for each $p \in M^{\ge \epsilon}$, where $C_1$ and
$C_2$ depend only on $K$, $\epsilon$, and $\area(\bdry \core(M_0))$.
\end{theorem}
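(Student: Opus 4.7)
The plan is to prove inflexibility by passing through an infinitesimal version of the statement and then integrating. First, connect $M_0$ to $M_1$ by a holomorphic path $M_t$, $t \in [0,1]$, of quasi-conformal deformations, parametrized so that the Beltrami coefficient $\mu_t$ on the sphere at infinity has $\|\mu_t\|_\infty$ comparable to $\log K$. At each $t$, differentiating the family gives an equivariant vector field $v_t$ on $\mathbb{H}^3$ whose boundary values realize the infinitesimal Beltrami differential; the symmetric part of $\nabla v_t$ measures the infinitesimal metric distortion, and $\Phi$ will be obtained as the time-one flow of $v_t$ on $M_t$.

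Second, construct $v_t$ as the \emph{visual extension} of $\mu_t$ into $\mathbb{H}^3$: at each point $p \in \mathbb{H}^3$, average the Beltrami coefficient with respect to the visual measure centered at $p$. The key algebraic feature is that this visual average is a strain field whose pointwise norm is bounded in terms of the portion of the visual sphere at $p$ that lies in the \emph{domain of discontinuity} of $\pi_1(M)$. For points $p$ in the convex core, most of the visual sphere lies in the limit set, and the strain field comes only from the remaining ``gaps''.

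Third, and this is the main technical step, establish the pointwise exponential decay
\[
\|\mathrm{sym}(\nabla v_t)(p)\| \;\le\; C_1' e^{-C_2 \, d(p,\, M_0 \setminus \core(M_0))}
\]
for $p \in M_t^{\ge \epsilon}$, with $C_1', C_2$ depending only on $\epsilon$ and $\area(\partial\core(M_0))$. The input is that the visual measure of the domain of discontinuity, viewed from $p$, decays exponentially in the distance from $p$ to $\partial\core$: this is a harmonic-measure estimate that uses the positive lower bound for the bottom of the spectrum of the Laplacian on the convex core, controlled in turn by a bound on the area of $\partial\core(M_0)$. The thick-part hypothesis $p \in M^{\ge \epsilon}$ is needed to rule out Margulis tube geometry where the visual averaging can concentrate. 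I expect this step --- establishing the exponential decay with explicit dependence on $\area(\partial\core)$ rather than on more delicate invariants --- to be the main obstacle, since it requires a careful geometric covering of $\partial\core$ to translate the area bound into a uniform Poisson-kernel estimate.

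Finally, assemble the diffeomorphism. Let $\phi_t$ be the time-$t$ flow of the vector field $v_t$ (suitably modified to be divergence-free so that the resulting $\Phi$ is volume-preserving; since trace of the strain is the divergence, one can subtract a correction supported where the argument is strongest). The bi-Lipschitz constant of $\Phi = \phi_1 \circ \cdots$ at a point $p$ is then bounded by $\exp\bigl(\int_0^1 \|\mathrm{sym}(\nabla v_t)(\phi_t(p))\|\, dt\bigr)$; combined with the infinitesimal decay from the third step and the fact that $\phi_t(p)$ stays within a uniformly bounded distance of $p$, this gives the claimed pointwise bi-Lipschitz estimate. The global constant $K^{3/2}$ comes from taking $\|\mu_t\|_\infty \le (K-1)/(K+1)$ at each $t$ and integrating; the sharp exponent requires the divergence-free choice above. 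Rank-one cusps are excluded so that the visual extension can be equivariantly defined and controlled near the thin parts arising from parabolics.
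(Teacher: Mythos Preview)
The paper does not prove this theorem here; it is quoted from \cite{Brock:Bromberg:inflexible} and accompanied only by a sketch. Your overall architecture --- connect $M_0$ to $M_1$ by a Beltrami isotopy, extend to a time-dependent strain field on $\mathbb{H}^3$, and integrate the flow --- matches that sketch, and the global volume-preserving $K^{3/2}$-bi-Lipschitz statement is exactly Reimann's theorem on the visual (Ahlfors--Thurston) extension. One small correction: that extension is already divergence-free, so no modification is needed to make $\Phi$ volume-preserving.

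The substantive divergence is in your Step~3. You propose to obtain the pointwise exponential decay directly from a visual-measure/harmonic-measure estimate on the portion of the sphere lying in the domain of discontinuity, controlled by the bottom of the Laplace spectrum. That is essentially McMullen's route in \cite{McMullen:book:RTM}, and as you yourself flag, it is not clear how to make the constants depend only on $\area(\partial\core(M_0))$ rather than on finer invariants of the boundary geometry. The paper's sketch indicates a different mechanism: one shows that the strain field arising from the Beltrami isotopy is \emph{harmonic}, proves exponential decay of its $L^2$ norm with depth in the convex core (the $L^2$ boundary data being controlled by $\area(\partial\core)$), and then converts the $L^2$ bound to a pointwise bound via a mean-value inequality on an embedded $\epsilon$-ball --- this is where the thick-part hypothesis enters, and where the Hodgson--Kerckhoff/Bromberg cone-manifold estimates are invoked. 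The $L^2$-to-pointwise passage is precisely the device that buys the clean dependence of $C_1,C_2$ on $K$, $\epsilon$, and $\area(\partial\core(M_0))$; your direct pointwise approach would need a separate argument to achieve that.
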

The existence of a volume preserving, $K^{3/2}$ bi-Lipschitz diffeomorphism was
established by Reimann \cite{Reimann:visual}, using work of Ahlfors
\cite{Ahlfors:visual} and Thurston \cite{Thurston:book:GTTM}  (see McMullen
\cite{McMullen:book:RTM} for a self-contained account).
  That the bi-Lipschitz constant
decays exponentially fast with depth in the convex core at points in
the thick part follows from comparing $L^2$ and pointwise bounds on
{\em harmonic strain fields} arising from extending a {\em Beltrami 
isotopy} realizing the deformation.  Exponential decay of the $L^2$
norm in the core can be converted to pointwise bounds via mean value
estimates, building on work in the cone-manifold deformation theory of
hyperbolic manifolds due to Hodgson and Kerckhoff
\cite{Hodgson:Kerckhoff:rigidity} and the second author
\cite{Bromberg:bers}.

Inflexibility was used in \cite{Brock:Bromberg:inflexible} to give a
new, self-contained proof of Thurston's {\em Double Limit Theorem},
and the hyperbolization theorem for  closed 3-manifolds that fiber over the
circle with pseudo-Anosov monodromy.
\begin{theorem}{strong double limit}{\rm (Thurston)} Let $S$ be a closed
  hyperbolic surface.  The sequence $Q(\psi^{-n}(X), \psi^n (X))$
  converges algebraically and geometrically to $Q_\infty$, the
  infinite cyclic cover of the mapping torus $M_\psi$ corresponding to
  the fiber $S$.
\end{theorem}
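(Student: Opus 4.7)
The plan is to use inflexibility (Theorem~\ref{inflexible}) together with the convex core width estimate (Theorem~\ref{distance in manifold}) to identify the geometric limit of $Q_n = Q(\psi^{-n}(X), \psi^n(X))$ as a hyperbolic 3-manifold $\psi$-equivariantly covering $M_\psi$, which then must be the infinite cyclic cover $Q_\infty$.

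First, I would place basepoints $p_n \in Q_n$ deep in the convex core. Because $\psi$ is pseudo-Anosov, it acts loxodromically on the curve complex $\calC(S)$ by Masur-Minsky, so the curve complex distance between the projections $\pi_\calC(\psi^{-n}(X))$ and $\pi_\calC(\psi^n(X))$ grows linearly in $n$. From bounded combinatorics along the axis of $\psi$, one can locate a curve $\gamma_n$ at the midpoint of this trajectory that is realized with uniformly bounded length in $Q_n$ (via Bers' inequality $\ell_{Q(X,Y)}(\gamma) \le 2\min(\ell_X(\gamma), \ell_Y(\gamma))$ combined with the bounded short-curve length along the $\psi$-orbit). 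Theorem~\ref{distance in manifold} then forces the geodesic representative $\gamma_n^*$ to lie at distance tending to infinity from $\bdry\core(Q_n)$; take $p_n \in \gamma_n^*$. A Jorgensen-Margulis compactness argument, using a uniform thick-part neighborhood of $p_n$, extracts a geometrically convergent subsequence $(Q_{n_k}, p_{n_k}) \to (M_\infty, p_\infty)$.

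Second, I would use inflexibility to show the limit carries an isometric $\Z$-action realizing $\psi$. The conformal boundary pairs $(\psi^{-n}(X), \psi^n(X))$ and $(\psi^{-(n+1)}(X), \psi^{n+1}(X))$ define quasi-Fuchsian manifolds that differ, after the natural $\psi$-shift of markings, by a quasiconformal deformation whose dilatation $K$ is bounded uniformly in $n$ (depending only on $X$ and $\psi$). Theorem~\ref{inflexible} then produces a $K^{3/2}$-bi-Lipschitz diffeomorphism between $Q_n$ and the $\psi$-shift of $Q_{n+1}$ whose pointwise distortion at $p_n$ decays exponentially in the depth of $p_n$ in the core, hence tends to $1$. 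Passing to the geometric limit, the composition of these comparison maps with $\psi$ converges to an isometry $\Psi \colon M_\infty \to M_\infty$ realizing the monodromy of $M_\psi$; hence $M_\infty$ covers $M_\psi$ with cyclic deck group generated by $\Psi$, and must be $Q_\infty$. Algebraic convergence then upgrades from geometric: the basing at $p_n \in \gamma_n^*$ captures a uniformly bounded generating set for $\pi_1(S)$ (a spine for $S$ realized near $\gamma_n^*$ has bounded diameter), so the representations $\rho_n$ converge on generators.

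The hard part is step two. One must track the $\psi$-shifted comparisons carefully enough to extract from a sequence of bi-Lipschitz maps a genuine limiting isometry realizing the monodromy (rather than some power, or a merely quasi-isometric self-map of $M_\infty$). The exponential decay in Theorem~\ref{inflexible} is precisely what makes this identification possible: the bi-Lipschitz constants near $p_n$ tend to $1$, so in the geometric limit the comparison stabilizes to an isometry, uniquely pinning down $M_\infty \cong Q_\infty$ and yielding strong convergence.
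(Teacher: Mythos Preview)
Your proposal aligns with the paper's treatment: the paper does not give a detailed proof of this theorem but attributes it to Thurston and points to \cite{Brock:Bromberg:inflexible} for an inflexibility-based argument, remarking only that ``the key to obtaining Theorem~\ref{strong double limit} from Theorem~\ref{inflexible} is an analysis of the growth rate of the convex core diameter in terms of the curve complex.'' Your sketch follows exactly this route---curve-complex growth of the $\psi$-orbit to force depth in the core, then inflexibility to produce comparison maps whose bi-Lipschitz constants decay to $1$ and hence yield a $\psi$-periodic isometric limit---and so matches the paper's referenced approach rather than departing from it.

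One technical point in your Step~I deserves care. Bers' inequality $\ell_{Q(X,Y)}(\gamma) \le 2\ell_X(\gamma)$ only bounds lengths of curves that are short on the \emph{conformal boundary}; in $Q_n = Q(\psi^{-n}(X),\psi^n(X))$ a curve $\gamma$ short on $X$ itself is short on neither boundary component, so Bers alone does not furnish a bounded-length ``midpoint'' curve. The argument in \cite{Brock:Bromberg:inflexible} (reflected in the paper's Proposition~\ref{inflexible iteration}) handles this differently: one fixes $\gamma$ short on $X$, hence short in the Fuchsian $Q_0$, and then the uniformly bi-Lipschitz Reimann maps $\phi_n \colon Q_n \to Q_{n+1}$ carry $\gamma^*$ forward with controlled length, while Theorem~\ref{distance in manifold} guarantees the image stays linearly deep. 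With that adjustment---building the bounded-length midpoint curve iteratively via the $\phi_n$ rather than invoking Bers directly---your outline is correct and coincides with the cited proof.
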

The manifolds 
$$Q_n = Q(\psi^{-n}(X), \psi^n(X))$$ admit volume preserving, uniformly
bi-Lipschitz {\em   Reimann maps} 
$$\phi_n \colon Q_n \to Q_{n+1}$$ as in Theorem~\ref{inflexible}.
The key to obtaining Theorem~\ref{strong double limit} from
Theorem~\ref{inflexible} is an analysis of the growth rate of the
convex core diameter in terms of the curve complex.

We will employ the following key consequence of inflexibility 
\cite[Prop. 9.7]{Brock:Bromberg:inflexible}.
\begin{proposition}{inflexible iteration} Given $\epsilon, R, L,C>0$
  there exist $B, C_1, C_2>0$ such that the following holds. Assume
  that $\cK$ is a subset of $Q_N$ such that $\diam(\cK)<R$,
$\inj_p(\cK) > \epsilon$ for each 
  $p \in \cK$ and $\gamma \in \cC^0(S)$ is represented by a closed
  curve in $\cK$ of length at most $L$ satisfying
$$\min\{d_\calC(\psi^{N+n}(Y), \gamma),
d_\calC(\psi^{-N-n}(X), \gamma\} \ge K_\psi n +B$$ for all $n \geq 0$.
Then we have
$$\log \bilip (\phi_{N+n}, p) \leq C_1 e^{-C_2 n}$$ 
for $p$ in $\phi_{N+n-1} \circ \cdots \circ \phi_{N} (\cK)$
and
$$\frac{C_1}{1 - e^{-C_2}} < C.$$
\end{proposition}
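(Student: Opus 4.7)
The plan is to apply Theorem~\ref{inflexible} iteratively to the Reimann maps $\phi_{N+n}$ and control the pointwise bi-Lipschitz constant on the tracked set $\cK_n := \phi_{N+n-1} \compos \cdots \compos \phi_N(\cK)$ by showing that $\cK_n$ lies at depth growing linearly in $n$ inside $\core(Q_{N+n})$. Theorem~\ref{inflexible} then converts linear depth into exponential bi-Lipschitz decay at points of $\cK_n$.

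First I would set up uniform constants. Each $\phi_k$ is $K^{3/2}$-bi-Lipschitz with $K$ determined by the Teichm\"uller translation length of $\psi$, and each $\area(\partial\core(Q_k))$ is bounded by $-2\pi\chi(S)$ via Gauss-Bonnet. Theorem~\ref{inflexible} thus provides uniform constants $A_1, A_2 > 0$ with
$$\log \bilip(\phi_k, p) \le A_1 \, e^{-A_2 \, d(p,\, Q_k \ssm \core(Q_k))}$$
at any $p$ in a fixed $\epsilon'$-thick part. Inductively, once the sought bound $\log\bilip(\phi_{N+m}, \cdot) \le C_1 e^{-C_2 m}$ holds for $m < n$, the composition defining $\cK_n$ has total log-distortion at most $C_1/(1-e^{-C_2}) < C$; choosing $C$ small (via $B$ large) ensures $\cK_n$ has diameter $< R'$ and injectivity radius $> \epsilon'$ for fixed $R', \epsilon'$, and that the geodesic representative $\gamma^*_n$ of $\gamma$ in $Q_{N+n}$ has length at most $L'$ and lies within uniform distance of $\cK_n$.

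The crux is the depth estimate $d(\cK_n,\, \partial\core(Q_{N+n})) \ge A_2 K_1 K_\psi \, n + B''$. Each component of $\partial\core(Q_{N+n})$ is a pleated surface whose short curves lie at bounded curve-complex distance $D_S$ from the corresponding conformal boundary $\psi^{\pm(N+n)}(\cdot)$ via Bers' bound. Choose on each such component a curve $\alpha^\pm_n$ of length in a fixed interval $[\epsilon_0, L_0]$ depending only on $S$. The hypothesis yields
$$d_\calC(\alpha^\pm_n, \gamma) \ge K_\psi n + B - D_S,$$
which Theorem~\ref{distance in manifold} upgrades to
$$d_{Q_{N+n}}(\alpha^\pm_n, \gamma^*_n) \ge K_1(K_\psi n + B - D_S) - K_2.$$
Since $\alpha^\pm_n$ lies a bounded distance from $\partial\core(Q_{N+n})$ and $\cK_n$ from $\gamma^*_n$, the depth estimate follows, with $B''$ linear in $B$. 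Substituting into the inflexibility bound gives $\log \bilip(\phi_{N+n}, p) \le C_1 e^{-C_2 n}$ with $C_2 = A_2 K_1 K_\psi$ independent of $B$ and $C_1$ decreasing as $B$ grows; a sufficiently large $B$ forces $C_1/(1-e^{-C_2}) < C$.

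The main obstacle will be closing the bootstrap: the uniform diameter and injectivity control on $\cK_n$ required to invoke Theorems~\ref{inflexible} and~\ref{distance in manifold} must be compatible with the output pair $(C_1, C_2)$ that in turn certifies those controls. This is handled by fixing $C_2$ first (it depends only on ambient constants), then choosing $B$ large enough that the resulting $C_1$ delivers both the thick-part and bounded-diameter conditions for $\cK_n$ and the final inequality $C_1/(1-e^{-C_2}) < C$.
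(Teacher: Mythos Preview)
Your proposal is correct and follows precisely the strategy the paper outlines: the paper does not give a self-contained proof here but cites \cite[Prop.~9.7]{Brock:Bromberg:inflexible}, accompanied only by the one-sentence explanation that $\gamma$ controls the depth of $\cK$ in $\core(Q_{N+n})$ via Theorem~\ref{distance in manifold}, so that sufficient initial depth freezes the geometry fast enough for the depth to grow linearly and the bi-Lipschitz constant to decay exponentially. Your bootstrap---fixing $C_2 = A_2 K_1 K_\psi$ from ambient constants, then enlarging $B$ to drive $C_1$ down until both the inductive thick/bounded-diameter control on $\cK_n$ and the inequality $C_1/(1-e^{-C_2}) < C$ hold---is exactly the mechanism that sketch is pointing to.
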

The simple closed curve $\gamma$ serves to control the depth of the
compact set $\calK$ in the convex core of $Q_{N+n}$ as $n \to \infty$
via inflexibility and Theorem~\ref{distance in manifold}: if $\calK$
starts out sufficiently deep, then the geometry freezes around it
quickly enough that Theorem~\ref{distance in manifold} guarantees its
depth grows linearly, resulting in the exponential convergence of the
bi-Lipschitz constant.

\bold{Double Iteration.}  The pseudo-Anosov double iteration $\{Q_n\}$ converges
strongly to the doubly degenerate manifold $Q_\infty$, invariant by
the isometry
$$\Psi \colon Q_\infty \to Q_\infty$$ 
the isometric covering translation for $Q_\infty$ over the mapping
torus $M_\psi$ for $\psi$ (see \cite{Thurston:hype2,
  Cannon:Thurston:peano, McMullen:book:RTM,
  Brock:Bromberg:inflexible}).  
Likewise, McMullen showed the
iteration $Q(X,\psi^n(X))$ also converges strongly to a limit $Q_{X,\psi^\infty}$
in the {\em Bers slice}
$$B_X = \{Q(X,Y) \st Y \in \Teich(S) \}.$$
% Indeed, the limit $Q_{X,\psi}$ is {\em asymptotically isometric} to the
% fiber $Q_\infty$: there is a diffeomorphism 
% $$\varphi \colon Q_{X,\psi} \to Q_\infty$$ so that the bi-Lipschitz
% constant tends to $1$ as the basepoint travels deeper and deeper into
% the convex core of $Q_\psi$
%  \cite{McMullen:book:RTM}. 

Each element $\tau \in \Mod(S)$ acts on $\AH(S)$ by {\em remarking},
or precomposition of the representation by the corresponding
automorphism of the fundamental group.  This action is denoted by 
$$ \tau(f,M) = (f \compos \tau^{-1},M)$$

Then by Thurston's {\em Double Limit Theorem}
\cite{Thurston:hype2,Otal:book:fibered,Brock:Bromberg:inflexible}, the remarking of $Q_{X,\psi^\infty}$ by $\psi^{-n}$ produces a sequence
$$\psi^{-n}(Q_{X,\psi^\infty}) = Q_{\psi^{-n}(X),\psi^\infty}$$
converging strongly 
in $\AH(S)$ to $Q_\infty$ (see \cite{McMullen:book:RTM}).

Bonahon's Tameness Theorem \cite{Bonahon:tame} provides
a homeomorphism
$$ F \colon S \times \reals \to Q_\infty$$ equipping the 
limit $Q_\infty$ with a product structure; we assume the isometric
covering transformation
$$\Psi \colon Q_\infty \to Q_\infty$$
in the homotopy class of $\psi$ for the covering $Q_\infty \to M_\psi$
preserves this product structure and acts by integer translation
$\Psi(S,t) = (S,t+1)$ in the second factor.
We denote by $Q_\infty[a,b]$ the subset $F(S \times [a,b])$.  

\begin{proposition}{deep volume}
  Let $\gamma \in \calC^0(S)$ satisfy $\ell_X(\gamma) < L_S$. Then
  there exists $a>0$ and $N_1>0$ so that for each $n>N_1$ the compact
  subset $Q_\infty[-a,a]$ contains $\gamma^*$ and admits a marking
  preserving 2-bi-Lipschitz embedding
$$\varphi_n \colon Q_\infty[-a,a] \to Q_{\psi^{-n}(X),\psi^\infty}.$$ 
\end{proposition}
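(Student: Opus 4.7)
The main input is the strong convergence $\psi^{-n}(Q_{X,\psi^\infty}) \to Q_\infty$ emphasized in the preceding paragraphs, which combines McMullen's strong convergence in the Bers slice $B_X$ with Thurston's Double Limit Theorem. I would package this convergence as providing, for any $R > 0$ and $\epsilon > 0$, a threshold $N_1 = N_1(R,\epsilon)$ and marking-preserving $(1+\epsilon)$-bi-Lipschitz embeddings of the $R$-ball around a chosen baseframe in $Q_\infty$ into $Q_{\psi^{-n}(X),\psi^\infty}$ for all $n \ge N_1$; one then chooses $R$ so that $Q_\infty[-a,a]$ sits inside that ball.

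First I would locate $\gamma^*$ and fix $a$. Since $S$ is closed, $Q_\infty$ has no cusps, so $\gamma \in \calC^0(S)$ has a unique closed geodesic representative $\gamma^* \subset Q_\infty$. Using the Bonahon product structure $F \colon S \times \reals \to Q_\infty$, the compact set $\gamma^*$ lies in $F(S \times [-a_0,a_0])$ for some $a_0 > 0$; take $a \ge a_0$. Pick a baseframe $\omega_\infty$ with basepoint on the central fiber $F(S \times \{0\})$. The compactness of $Q_\infty[-a,a]$ then gives some $R > 0$ with $Q_\infty[-a,a] \subset B_R(\omega_\infty)$.

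With $R$ fixed, I would invoke strong convergence with $\epsilon < 1$ to obtain baseframes $\omega_n$ in $Q_{\psi^{-n}(X),\psi^\infty}$ and, for $n \ge N_1$, embeddings
$$\varphi_n \colon B_R(\omega_\infty) \to Q_{\psi^{-n}(X),\psi^\infty}$$
which are pointwise $(1+\epsilon) < 2$-bi-Lipschitz, carry $\omega_\infty$ to $\omega_n$, and intertwine the associated representations on large balls in $\pi_1(S)$. Restricting $\varphi_n$ to $Q_\infty[-a,a]$ produces the required map.

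The one point that needs real care is the verification that $\varphi_n$ is genuinely marking preserving rather than merely an unmarked bi-Lipschitz comparison. This is the content of the algebraic half of strong convergence: the framed geometric convergence is arranged to be compatible with the convergent representations $\rho_n \to \rho_\infty$, so on a ball of any fixed radius the embedding is homotopic, through maps compatible with the markings, to the identification supplied by the algebraic limit, and hence is marking preserving as a map between elements of $\AH(S)$. Once this is in hand the conclusion is immediate.
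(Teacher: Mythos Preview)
Your proposal is correct and follows essentially the same route as the paper: choose $a$ so that $Q_\infty[-a,a]$ contains $\gamma^*$, then invoke the strong convergence $\psi^{-n}(Q_{X,\psi^\infty}) = Q_{\psi^{-n}(X),\psi^\infty} \to Q_\infty$ (already recorded in the paragraphs preceding the proposition) to extract marking-preserving nearly-isometric embeddings, which are eventually $2$-bi-Lipschitz. The paper's argument is terser and additionally notes that $\psi^n(\gamma)^*$ lies arbitrarily deep in $\core(Q_{X,\psi^\infty})$, but your more explicit unpacking of the geometric-topology definition and of marking preservation via the algebraic half of strong convergence is exactly what underlies that sentence.
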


\begin{proof}
  The Proposition follows from the observation that the geodesic
  representatives of $\psi^n(\gamma)$ lie arbitrarily deep in the
  convex core of $Q_{X,\psi^\infty}$, and the fact that the isometric
  remarkings $\psi^{-n}(Q_{X,\psi^\infty}) = Q_{\psi^{-n}(X),\psi^\infty}$ converge
  to the fiber $Q_\infty$ (see
  \cite[Thm. 3.11]{McMullen:book:RTM}). Choosing an interval $[-a,a]$
  so that $Q_\infty[-a,a]$ contains $\gamma^*$, the marking preserving
  bi-Lipschitz embeddings
$$\varphi_n \colon Q_\infty[-a,a] \to Q_{\psi^{-n}(X),\psi^\infty}$$
are eventually $2$-bi-Lipschitz, giving the desired $N_1$.
\end{proof}
We note that we may argue symmetrically for $Q_{\psi^{-\infty},
  \psi^n(X)}$, the strong limit of $Q(\psi^{-m}(X), \psi^n(X))$ as $m
\to \infty$.

\section{The Proof}
\label{proof}

In this section we give the proof of Theorem~\ref{main}.  
\begin{proof} The proof is a straightforward application of
  Proposition~\ref{inflexible iteration}.  Making an initial choice of
  $N$, we will find, for each $k$, a subset $\calK_k$ of $Q_{N+k}$
  accounting for all but a uniformly bounded amount of the volume of
  the core of $Q_{N+k}$. Fixing $k$, the volume preserving Reimann
  maps $\phi_{N+k+n}$ of Proposition~\ref{inflexible iteration}
  applied to $\calK_k$ produce subsets of $Q_{N+k+n}$ that converge as
  $n \to \infty$ to a subset of $Q_\infty$ within bounded volume of
  $2k$ copies of the fundamental domain for the action of $\psi$. This
  yields the desired comparison.
 
\bold{Step I.} {\em Choose constants.} 
As the input for Proposition~\ref{inflexible iteration}, let $$R > 4\diam
(Q_\infty[-a,a]),$$ 
 take $L > 4L_S$, and 
 fix $\epsilon < \epsilon_\psi/4$, where 
$$\epsilon_\psi = \inj(M_\psi) = \inj (Q_\infty)$$
where $\inj(M) = \inf_{p \in M} \inj_p(M)$.  Finally, taking $C=2$, we
take $B$, $C_1$ and $C_2$ satisfying the conclusion of
Proposition~\ref{inflexible iteration}.
Recall that $\gamma \in \calC^0(S)$ satisfies $\ell_X (\gamma) <
L_S$.  
Then applying \cite[Thm. 8.1]{Brock:Bromberg:inflexible} there is an $N_0>0$ so that 
$$\min \{ d_\calC (\psi^{-N_0-n}(X), \gamma) ,
d_\calC(\psi^{N_0+n}(X),\gamma) \} \ge K_\psi n +B$$ for all $n\ge0$.

\bold{Step II.} {\em Geometric convergence.}
Applying Proposition~\ref{deep volume}, there is an $N_1$ which we may
take so that $N_1 >N_0$ 
so that for each $N> N_1$ there are  $2$-bi-Lipschitz embeddings 
$$\varphi_N^- \colon Q_\infty[-a,a] \to \core( Q_{\psi^{-N}(X),\psi^\infty})$$
$$\varphi_N^+ \colon Q_\infty[-a,a] \to \core( Q_{\psi^{-\infty},\psi^{N}(X)})$$
that are marking preserving.  
Applying strong convergence of 
$$Q(Y,\psi^n(X)) \to Q_{Y,\psi^\infty} \ \ \ \text{and} \ \ \ 
Q(\psi^{-n}(X), Y) \to Q_{\psi^{-\infty},Y},$$ we take $N_2 >N_1$ so that
for each $\delta>0$, $D>0$, and $N> N_2$, we have $k_0$ so that for
$k>k_0$ there are diffeomorphisms
$$\eta^-_{N,k}  \colon Q_{\psi^{-N}(X),\psi^\infty} \to
Q(\psi^{-N}(X),\psi^{N+2k}(X))$$
and
$$\eta^+_{N,k}  \colon Q_{\psi^{-\infty},\psi^N(X)} \to
Q(\psi^{-N-2k}(X),\psi^{N}(X))$$ 
so that $\eta^-_{N,k}$  
has bi-Lipschitz constant satisfying
$\log \bilip(\eta^-_{N,k},p)< \delta$  for all points in the 
$D$-neighborhood of $\varphi^-_{N}(Q_\infty[-a,a])$, and likewise for
$\eta^+_{N,k}$. 

It follows that if we fix $N$ satisfying $N>N_2$ for the remainder of
the argument, we have
that the images $\varphi^-_N(Q_\infty[-a,a])$ 
and $\varphi^+_N(Q_\infty[-a,a])$ determine
product regions in $\core(Q_{\psi^{-N}(X),\psi^\infty})$ and $\core(Q_{\psi^{-\infty}, \psi^{N}(X)})$
whose complements contain one product region of volume bounded by
$\calV>0$.  

Noting that the action by $\psi^{\pm k}$ on $\AH(S)$ gives
$$\psi^{-k}(Q(\psi^{-N} (X), \psi^{N+2k}(X))) = Q_{N+k} = \psi^k ( Q(\psi^{-N -2k}(X), \psi^N(X))),$$ we let, for each
$k>0$, the subsets $\calK^-_k$ and $\calK^+_k$ in $Q_{N+k}$ 
be given by
$$\psi^{-k} ( \eta^-_{N,k} \compos \varphi^-_{N}(Q_\infty[-a,a])) \
\ \ \text{and} \ \ \ \psi^k(\eta^+_{N,k} \compos
\varphi^+_{N}(Q_\infty[-a,a]))$$ by following the embeddings of
$Q_\infty[-a,a]$ from geometric convergence with the isometric
remarkings $\psi^{-k}$ and $\psi^k$.  Then geometric convergence
implies that for each $k>k_0$ the component of $Q_{N+k} \setminus
\calK_k^-$ facing $\psi^{-N -k}(X)$ has intesection with the convex
core bounded by $2\calV$ for $k$ large, and likewise for $\calK_k^+$.

\bold{Step III.} {\em Apply Inflexiblility (Proposition~\ref{inflexible iteration}).}
We take $B$ as in Proposition~\ref{inflexible iteration} given the above
choices for $\epsilon$, $L$, $R$ and $C$.

For our choice of $N$, we know $\calK_k^+$ and $\calK_k^-$ each have diameter
at most $R$, injectivity radius at least $\epsilon$, and as $L =
4L_S$, $\calK_k^+$ and $\calK_k^-$ contain representatives $\gamma_k^-$ of
$\psi^{-k}(\gamma)$ and $\gamma_k^+$ of $\psi^{k}(\gamma)$ of length
less than $L$.  As $B$ is chosen as in the output of
Proposition~\ref{inflexible iteration} and $N$ is chosen as above we have
$$\min \{ d_\calC(\psi^{-N-k-n} (X), \gamma_k^-) ,
d_\calC(\psi^{N+k+n}(X) ,\gamma_k^-) \} \ge K_\psi n + B$$ is satisfied
for all $n\ge0$ and likewise for $\gamma_k^+$.

Let
$\phi_N \colon Q_N \to Q_{N+1}$ denote the (marking preserving)
Reimann map furnished by Proposition~\ref{inflexible iteration}. Then the composition of Reimann maps
$$\Phi_{n} = \phi_{N+n} \compos \ldots \compos \phi_N \colon Q_N \to
Q_{N+n+1}$$ is globally volume  preserving.

Furthermore, since $\calK_k^+$ and $\calK_k^-$ each satisfy the
hypotheses of Proposition~\ref{inflexible iteration} the compositions
are uniformly bi-Lipschitz as $n \to \infty$. It follows from
Arzela-Ascoli that we may extract a limit
limit $\Phi_\infty$ on $\calK_k^+$ and that $\Phi_\infty$ sends $\gamma_k^+$ to a curve of length
at most $8L$ and likewise for $\calK_k^-$ and $\gamma_k^-$. Since
$\Phi_\infty$ is $2$-bi-Lipschitz on $\calK_k^-$ and $\calK_k^+$, it
follows that $\Phi_\infty(\calK_k^-)$ has diameter $8R$, and contains a
representative of $\psi^{-k}(\gamma)$ of length $8L_S$ and likewise
for $\Phi_\infty(\calK_k^+)$ and $\psi^k(\gamma)$. 
There is thus a $d>0$ depending only on $R$ and $L_S$ and
$\epsilon_\psi$ so that we have
$$\Phi_\infty (\calK_k^-) \subset Q_\infty[-k-d,-k+d] \ \ \ \text{and}
\ \ \ 
\Phi_\infty(\calK_k^+) \subset Q_\infty[k-d,k+d].$$

Furthermore, if we take $k$ large enough, we may apply
Theorem~\ref{distance in manifold} to conclude that 
$$d_{Q_{N+k+n}}(\gamma_k^-,\gamma_k^+) >
16R$$ which ensures that $\Phi_{n+k}(\calK_k^-)$ and $
\Phi_{n+k}(\calK_k^+)$ are disjoint for all $n \ge 0$.  The complement
$Q_{N+k} \setminus \calK_k^- \cup \calK_k^+$ contains one subset
$O_{N+k}$ with compact closure `between' the product regions
$\calK_k^-$ and $\calK_k^+$.

Letting $$\calK_k = \calK_k^- \cup O_{N+k} \cup
\calK_k^+,$$
the images
$\Phi_{k+n}(\calK_k)$ satisfy
$$\vol(\calK_k) = \vol(\Phi_{k+n}(\calK_k))$$ since 
$\Phi_{k+n}$ is the composition of volume preserving maps.

But strong convergence of $Q_{N+k+n}$ to $Q_\infty$ as $n\to
\infty$  guarantees that
for large $n$ there are nearly isometric marking-preserving embeddings
$$G_n \colon Q_\infty[-k-d, k+d] \to Q_{N+k+n}$$ that are surjective
onto $\Phi_{k+n}(\calK_k)$ for $n$ sufficiently large.

We conclude that $$(2k-2d)\vol(M_\psi)  \le
\vol(\Phi_\infty(\calK_k)) \le (2k+2d) \vol(M_\psi)$$
and that $$\vol(\core(Q_{N+k})) - 4\calV \le \vol(\calK_k) \le
\vol(\core(Q_{N+k})$$  for all $k$ sufficiently large.
Thus we conclude $$|\vol(\core(Q_{N+k})) - 2(N+k) \vol(M_\psi)| < 2(d+N)\vol(M_\psi) +
4 \calV$$
completing the proof.
\end{proof}

To complete the proof of Theorem~\ref{translation bound}, we conclude
the section by addressing the case when $S$ has boundary.

\begin{proof}[Proof of Theorem~\ref{translation bound}.] 
  We now complete the proof of Theorem~\ref{translation bound}.  It
  remains to treat the case when $S$ has boundary.  We thank Ian Agol
  for suggesting such an argument applies in the setting of the
  Teichm\"uller metric; we employ a similar line of reasoning for the
  Weil-Petersson metric, recovering the Teichm\"uller case as a
  consequence.

We note the following: by Ahlfors Lemma \cite{Ahlfors:simplicial}, for a surface $S = S_{g,n}$ with genus $g>1$ and
$n>0$ boundary components, the natural forgetful map $$\Teich(S_{g,n})
\to \Teich(S_{g,0})$$ obtained by filling in the $n$ punctures on a
surface $X \in \Teich(S_{g,n})$ is a contraction of Poincar\'e
metrics and thus of Weil-Petersson
metrics (see e.g. \cite{Schumacher:Trapani:conical}). Assuming an even
number of punctures, we may branch at the punctures to obtain
degree-$k$ covers $\tilde{S}_k$.

Recall that the normalized Weil-Petersson distance $d_{\rm
  WP^*}(.,.)$, obtained 
by taking 
$$ d_{\rm WP^*}(.,.) = \frac{d_{\rm WP}(.,.)}{ \sqrt{\area(S)}},$$ is invariant under 
the passage to finite covers: lifting to finite covers induces an isometry
of normalized Weil-Petersson metrics.  Given $\psi$ pseudo-Anosov, let
$\|\psi\|_{\rm WP^*}$ denote its translation length in the
  normalized Weil-Petersson metric.

Letting $\psi \in \Mod(S)$, then, we let
$\tilde{\psi}_k$ denote the lift to $\Mod(\tilde{S}_k)$, and
$\hat{\psi}_k \in \Mod(\hat{S}_k)$ obtained by filling in the
punctures of $\tilde{S}_k$ to obtain $\hat{S}_k$.  

Then we have
$$\|\psi\|_{\rm WP^*} = \| \tilde{\psi}_k \|_{\rm WP^*} \ge
 C_k \cdot  \| \hat{\psi}_k \|_{\rm WP^*} 
$$
where $C_k = \sqrt{ \area(\hat{S}_k)/\area(\tilde{S}_k)} \to 1$ as $k \to \infty$.
Applying Theorem~\ref{translation bound} in the closed case we obtain,
$$\| \psi \|_{\rm WP^*} \ge  C_k \frac{2}{3}
\frac{\vol(M_{\hat{\psi}_k})}{\area(\hat{S}_k)}.$$

As $M_{\hat{\psi}_k}$ admits an order-$k$ isometry
corresponding to the $k$-fold branched covering, it covers a fibered
orbifold with $n$ order-$k$ orbifold loci, which converges
geometrically to the fibered 3-manifold $M_{\psi}$ as $k \to
\infty$.  Likewise, $\hat{S}_k$ covers an orbifold with $n$ cone points
with cone-angle $2\pi/k$, whose area is $\area(\hat{S}_k)/k$, 
which converges to $\area(S)$ as
$k\to \infty$.

Thus, dividing the top and the bottom by $k$, the right hand side of
the inequality tends to
$$\frac{2}{3}
\frac{\vol(M_{\psi})}{\area(S)}$$ as
$k \to \infty$, and the estimate holds.

Since any $S = S_{g,n}$ with $n>0$ is finitely covered by $S_{g',n'}$
with $g'>1$ and $n'$ even, the proof is complete.
 \end{proof}

\section{Applications}
We note the following applications to the Weil-Petersson geometry of
Teichm\"uller space.

When $\alpha$ and $\beta$ are a longitude and meridian pair on the
punctured torus, the estimate of Theorem~\ref{farey edge} gives a lower bound 
$$\frac{\calV_8}{3 \sqrt{\pi/2}} \le \ell_{\rm WP}(e)$$
to any edge $e$ in the Farey graph $\mathbb{F}$.
We remark that this estimate has implications for effective
combinatorial models for $\Teich(S)$.

In particular, the main result of \cite{Brock:wp} guarantees the
existence of $K_1$, $K_2$ depending only on $S$ so that
$$\frac{d_P(P_1,P_2)}{K_1} - K_2 \le d_{\rm WP}(N(P_1) ,N(P_2)) \le
K_1 d_P(P_1,P_2) + K_2.$$
Here, the distance $d_P$ is taken in the {\em pants graph} $P(S)$
whose vertices are associated to pants decompositions of $S$ and whose
edges are associated to prescribed elementary moves (see
\cite{Brock:wp}, or \cite{Brock:pdwp} for an expository account) and
$N(P_i)$ denotes the unique maximally noded Riemann surface in the boundary of
Teichm\"uller space for which the curves in $P_i$ have been pinched to
cusps.  
To date, effective estimates on $K_1$ and $K_2$ have been elusive.

Theorem~\ref{translation bound} gives the following estimate in
the case of the punctured torus $S$, on which each pants decomposition
is represented by a single non-peripheral simple closed curve.
\begin{theorem}{pants}
Let $S$ be a one-holed torus and let $\alpha$ and $\beta$ denote
essential simple closed curves on $S$.
If $d_P(\alpha,\beta) = 1$ then 
$$ \frac{\calV_8}{3 \sqrt{\pi/2}} \le d_{\overline{\rm WP}}(N(\alpha),
N(\beta))
\le 2 \sqrt{ 30}\, \pi^\frac{3}{4}
$$
and if $d_P(\alpha,\beta) >1$ then we have
$$ \frac{ \calV_3}{3 \sqrt{\pi/2}}  
d_P(\alpha,\beta)
\le 
d_{\overline{\rm WP}}(N(\alpha), N(\beta)) 
\le
2 \sqrt{ 30}\, \pi^\frac{3}{4}
d_P(\alpha,\beta)$$
\end{theorem}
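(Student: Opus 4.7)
The plan is to split on the value of $d_P(\alpha,\beta)$, using Theorem~\ref{farey edge} for the case $d_P=1$, and combining Theorem~\ref{translation bound} with the Cao--Meyerhoff minimum cusped volume estimate $\vol(M) \ge 2\calV_3$ for the case $d_P > 1$.

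For $d_P(\alpha,\beta) = 1$: The pants graph of the once-punctured torus is canonically isomorphic to the Farey graph, so $\alpha$ and $\beta$ span a single Farey edge $e$. Since the action of $\SL_2(\zed) = \Mod(S)$ on $\half^2 = \Teich(S)$ is by Weil-Petersson isometries, any Farey edge is mapped to the imaginary axis $I$, and the endpoints of $e$ in the completion $\overline{\Teich(S)}$ are exactly $N(\alpha)$ and $N(\beta)$. Therefore $d_{\overline{\rm WP}}(N(\alpha), N(\beta)) = \ell_{\rm WP}(I)$, and Theorem~\ref{farey edge} gives both stated bounds immediately.

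For $d_P(\alpha,\beta) = k > 1$, the upper bound follows from the triangle inequality applied to a pants-geodesic path $\alpha = \gamma_0, \gamma_1, \ldots, \gamma_k = \beta$ in the Farey graph, using the $d_P=1$ upper bound of $2\sqrt{30}\,\pi^{3/4}$ on each consecutive pair.  For the lower bound, the first step is to observe that for every pseudo-Anosov $\psi \in \Mod(S)$ on the punctured torus, the mapping torus $M_\psi$ is a cusped orientable hyperbolic 3-manifold, so $\vol(M_\psi) \ge 2\calV_3$ by Cao--Meyerhoff. Applying Theorem~\ref{translation bound} with $2g-2+n = 1$ then gives the universal bound
$$\|\psi\|_{\rm WP} \;\ge\; \frac{2\calV_3}{3\sqrt{\pi/2}}$$
for every pseudo-Anosov mapping class.

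The second step is to upgrade this universal translation-length bound into a lower bound on the distance between nodal surfaces proportional to their pants distance. The strategy is to exhibit, for $\alpha,\beta$ with $d_P(\alpha,\beta) = k$, a pseudo-Anosov $\psi$ whose Weil-Petersson axis in $\half^2$ has its endpoints (or a long subsegment) passing through a neighborhood of the geodesic arc connecting $N(\alpha)$ to $N(\beta)$, and whose pants translation length is at most $2$ per WP translation period.  Since the action of $\psi$ moves one fundamental domain along its axis by both $\|\psi\|_{\rm WP}$ in WP length and at most $2$ in pants distance, the ratio $\|\psi\|_{\rm WP}/\|\psi\|_P$ is bounded below by $\calV_3/(3\sqrt{\pi/2})$, and this transfers to the desired bound $d_{\overline{\rm WP}}(N(\alpha), N(\beta)) \ge (\calV_3/(3\sqrt{\pi/2}))\, d_P(\alpha,\beta)$.

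The main obstacle is precisely this last step: one must extract, from the universal pseudo-Anosov WP translation length bound, an effective coarse Lipschitz estimate for the projection $\overline{\Teich(S)} \to P(S)$ with the explicit constant $3\sqrt{\pi/2}/\calV_3$. The factor of $\tfrac{1}{2}$ separating the $d_P = 1$ and $d_P > 1$ lower bound constants reflects that the infimum of $\|\psi\|_P$ over pseudo-Anosov $\psi \in \Mod(S)$ is $2$ (realized, for instance, by a product of opposite twists about a Farey-adjacent pair), so that pants distance can grow by at most $2$ per WP-period of the axis. The verification of this ratio, and the geometric matching of the pseudo-Anosov axis to an actual path between $N(\alpha)$ and $N(\beta)$, are the technical heart of the argument.
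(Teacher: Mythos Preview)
Your treatment of the case $d_P(\alpha,\beta)=1$ and of both upper bounds is correct and matches the paper. The gap is in the lower bound for $d_P(\alpha,\beta)=k>1$: you have correctly assembled the ingredients (Cao--Meyerhoff plus Theorem~\ref{translation bound} gives $\|\psi\|_{\rm WP}\ge 2\calV_3/(3\sqrt{\pi/2})$ for every pseudo-Anosov on $S_{1,1}$), but you have not supplied a mechanism that converts a universal lower bound on translation length into a lower bound on $d_{\overline{\rm WP}}(N(\alpha),N(\beta))$. You say as much yourself: the ``geometric matching of the pseudo-Anosov axis to an actual path between $N(\alpha)$ and $N(\beta)$'' is left as an unresolved obstacle. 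As written, nothing prevents the geodesic from $N(\alpha)$ to $N(\beta)$ from taking a shortcut that avoids any particular pseudo-Anosov axis entirely.

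The paper closes this gap with a separation argument using the Farey tessellation rather than by approximating with an axis. A Farey geodesic from $\alpha$ to $\beta$ is a sequence of edges $e_1,\dots,e_k$; each consecutive pair $e_i,e_{i+1}$ shares a \emph{pivot} vertex, and from that pivot one drops a \emph{bisector} $b_i$ meeting the opposite side of the Farey triangle orthogonally. Any path in $\overline{\Teich(S)}$ from $N(\alpha)$ to $N(\beta)$ must cross every $b_i$, so it suffices to bound the separation between consecutive bisectors (and between $\alpha,\beta$ and the extremal bisectors). In the minimal configuration---exactly one Farey triangle per pivot---the bisectors are all orthogonal to the axis of a conjugate of the figure-eight monodromy $\psi_{\rm fig8}=\left(\begin{smallmatrix}2&1\\1&1\end{smallmatrix}\right)$ and meet it at half-period intervals, so consecutive bisectors are at distance at least $\tfrac{1}{2}\|\psi_{\rm fig8}\|_{\rm WP}\ge \calV_3/(3\sqrt{\pi/2})$; more triangles per pivot only increase the separation. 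Summing over the $k$ segments gives the lower bound. This bisector picture is the concrete geometric input your outline is missing, and it also explains transparently why the factor of $2$ appears: it is the half-period of $\psi_{\rm fig8}$, not an abstract $\inf\|\psi\|_P$.
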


\begin{proof}
  The space $\Teich(S)$ is naturally the unit disk $\Delta$, and edges
  of the usual Farey graph are geodesics in the Weil-Petersson (as
  well as Teichm\"uller) metric.  Once $d_P(\alpha,\beta)$ is at least $2$,
  the completed Weil-Petersson geodesic $g$ in $\overline{\Teich(S)}$
  joining $N(\alpha)$ to $N(\beta)$ joins the endpoints of a {\em
    Farey sequence}, or a sequence $e_1,\ldots,e_n$ in $\mathbb{F}$
  that joins $\alpha$ to $\beta$.  Each pair of successive edges $e_i$
  and $e_{i+1}$  determines a {\em pivot}, where they meet, and
  emanating from each pivot is a {\em bisector} $b_i$ that meets the
  opposite edge of the ideal triangle determined by $e_i$ and
  $e_{i+1}$ perpindicularly (see Figure~\ref{figure:bisector}).
\begin{figure}[htb]
\labellist
\small\hair 2pt
 \pinlabel {$e_1$}  at 76 298
 \pinlabel {$e_2$}  at 239 299
 \pinlabel {$e_3$}  at 438 289
 \pinlabel {$b_1$}  at 185 10
 \pinlabel {$b_2$}  at 355 450
 \pinlabel {$\alpha$}  at 0 50
 \pinlabel {$\beta$}  at 500 471
\endlabellist
\centering
\includegraphics[scale=.4]{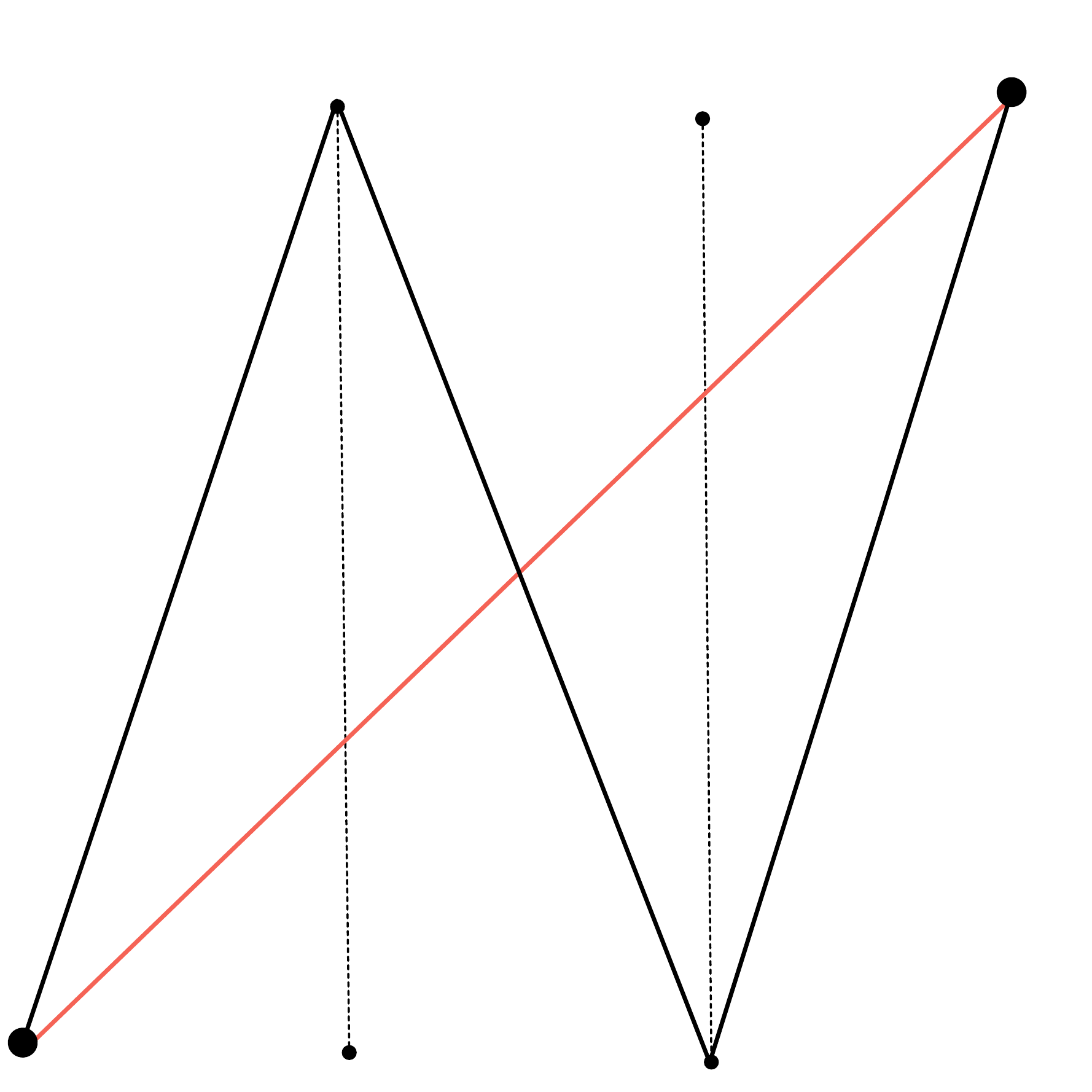}
\caption{Bisectors of the Farey pivots have separation at least
  $\|\psi_{\rm fig8}\|_{\rm WP}/2$  }
\label{figure:bisector}
\end{figure}
For a Farey sequence that determines {\em exactly one} Farey triangle
per pivot, these bisectors are perpindicular to the axis determined by
a conjugate of the monodromy of the figure-$8$ knot complement, the
mapping class
$$\psi_{\rm fig8} = 
\left[ 
\begin{array}{cc}
2 & 1 \\
1 & 1
\end{array}
\right]
=
\left[ 
\begin{array}{cc}
1 & 1 \\
0 & 1
\end{array}
\right]
\left[
\begin{array}{cc}
1 & 0 \\
1 & 1
\end{array}
\right]
,$$
and the intersections occur every half-period along the axis.

Thus, in this minimal case, successive bisectors have separation at
least half the translation distance of $\psi_{\rm fig8}$ or 
at least $$ \frac{\calV_3}{ 3 \sqrt{\pi/2}}$$
by Theorem~\ref{wp systole punctured}.  When there are more triangles
per pivot, the successive bisectors are further apart.  Thus, the
bisectors determined by the Farey sequence have at least the
separation of the minimal case, as do the initial and terminal
vertices $\alpha$ and $\beta$ from the first and last bisector. The
lower bound follows.

The upper bound follows from the triangle inequality, and the fact
that each Farey edge has length bounded by $2 \sqrt{ 30}\,
\pi^\frac{3}{4}$ by Theorem~\ref{farey edge}.
\end{proof}

In the language of the introduction, if $p/q$ has continued fraction
expansion 
$$\frac{p}{q} = [a_1, a_2, \ldots, a_n]$$
then $p/q$ has distance $n$ from $0$ in the Farey graph $\mathbb{F}$;
we say $p/q$ has {\em Farey depth} $n$,
or $\depth_{\mathbb{F}}
(p/q) = n$.

Then we have
$$  \frac{ \calV_3 \, \depth_{\mathbb{F}}(p/q)}{3 \sqrt{\pi/2}}   \le
d_{\overline{\rm WP}}\left(0,p/q\right) \le 
2 \sqrt{ 30}\, \pi^\frac{3}{4} 
\, \depth_{\mathbb{F}}(p/q) .$$

\bold{Remark.} We remark that genus independent upper bounds are
obtained in \cite{Cavendish:Parlier:diameter} on the extended
Weil-Petersson distance between maximally noded surfaces in terms of
the {\em cubical pants graph}, a modification of the usual pants graph
obtained by adding standard Euclidean $n$-cubes corresponding to
commuting families of elementary moves.  Explicit constants can be
given here in terms of the bounds on the length $\ell_{\rm WP}(I)$.
It is interesting to imagine how one might attempt lower bounds
without making use of the separation properties present in the
one-holed-torus and four-holed-sphere cases.

\bibliographystyle{math} 
\bibliography{math}

{\sc \small
 \bigskip

\noindent Brown University \bigskip

\noindent University of  Utah

}

\end{document}